 \newtheorem{thm}{Theorem}[section]
 \newtheorem{cor}[thm]{Corollary}
 \newtheorem{lem}[thm]{Lemma}
 \newtheorem{prop}[thm]{Proposition}
 \newtheorem{exam}[thm]{Example}
 \theoremstyle{definition}
 \newtheorem{defn}[thm]{Definition}
 \theoremstyle{remark}
 \newtheorem{rem}[thm]{Remark}
 \numberwithin{equation}{section}
 \newcommand\ric{Ric}
 \newcommand\cur{Rm}
 \newcommand{\dist}{dist}
\begin{document}

\title
{Asymptotic behavior of Type III mean curvature flow on noncompact hypersurfaces}

\author{Liang Cheng, Natasa Sesum}

\dedicatory{}
\date{}

 \subjclass[2000]{
Primary 53C44; Secondary 53C42, 57M50.}

\keywords{}
\thanks{Liang Cheng's Research partially supported by the Natural Science
Foundation of China 11201164 and a scholarship from the China Scholarship Council.
Natasa  Sesum thanks the NSF   support in DMS-0905749 and DMS-1056387. }

\address{Liang Cheng, School of Mathematics and Statistics, Huazhong Normal University,
Wuhan, 430079, P.R. CHINA}

\email{chengliang@mail.ccnu.edu.cn}

\address{Natasa Sesum: Department of Mathematics, Rutgers University, 110 Frelinghuysen road, Piscataway,
NJ 08854, USA.}

\email{natasas@math.rutgers.edu}

 \maketitle

\begin{abstract}
In this paper, we introduce the monotonicity formulas for the
mean curvature flow which are related to  self-expanders. Then we use the monotonicity to study the asymptotic behavior of  Type III  mean curvature flow on noncompact hypersurfaces.
\end{abstract}

\section{introduction}

Let $x_0:M^n\to \mathbb{R}^{n+1}$ be a complete immersed hypersurface. Consider the mean curvature
flow
\begin{equation}\label{mcf}
\frac{\partial x}{\partial t}=\vec{\mathbf{H}},
\end{equation}
with the initial data $x_0$, where $\vec{\mathbf{H}}=-H\nu$ is the mean curvature vector and $\nu$ is the outer unit
normal vector. One of the main topics of interest in the study of mean curvature flow (\ref{mcf}) is
that of singularity formation.  Since mean curvature flow always blows up at finite time on closed hypersurfaces,  the singularity formation of the mean curvature flow \eqref{mcf}  on closed hypersurfaces at the first singular time is described by Huisken \cite{H1}  as follows.
 Let $x(\cdot,t)$ be the solution to the mean curvature flow (\ref{mcf}). Let $A(\cdot,t)$ be the second fundamental form of $x(\cdot,t)$.
The solution to  mean curvature flow (\ref{mcf})  on closed hypersurfaces which blows up at finite time $T$ forms a

(1) Type I singularity if $\sup\limits_{M\times
[0,T)}(T-t)|A|^2<\infty$, \noindent

(2) Type II singularity if $\sup\limits_{M\times
[0,T)}(T-t)|A|^2=\infty$.

For noncompact hypersurfaces, solution to the mean curvature flow may exist for all times.
In \cite{EH1}, Ecker and Huisken showed that the mean curvature flow on locally Lipschitz continuous entire graph in
$\mathbb{R}^{n+1}$ exists for all time. In this paper, we study the asymptotic behavior of Type III mean curvature flow for which we have  a long time existence by definition.  In \cite{H1} Hamilton defined the Type III Ricci flow as the flow which has a long time existence and  such that $\sup_{t\in (0,\infty)} t \|\cur(g_t)\| < \infty$, where $\|\cur(g_t)\|$ is the norm of the Riemannian curvature of metric $g_t$. Analogous to the  Ricci flow we have the following definition for the mean curvature flow.
\begin{defn}\label{def_Type_III}
 We say the solution $x(\cdot,t)$ to  the mean curvature flow (\ref{mcf}) on noncompact hypersurfaces which exists for all time  forms a
 Type III singularity if $\sup\limits_{M\times
[0,+\infty)}t|A|^2<\infty$.
\end{defn}
\noindent Typical examples of Type III mean curvature flow are evolving entire graphs satisfying the linear growth condition, or equivalently
the entire graphs having the bounded gradient, which in particular implies
\begin{equation}\label{linear_growth}
V:=\langle \nu,w\rangle^{-1}\leq c,
\end{equation}
where $\nu$ is the unit normal vector of the graph and $w$ is a fixed unit vector such that $\langle \nu,w\rangle>0$.
Ecker and Huisken showed that the mean curvature flow on entire graphs satisfying the linear growth condition is  Type III (see Corollary 4.4 in \cite{EH}). Bobe \cite{B} has also related results for the cylindrical graphs.

Huisken \cite{H1} introduced his entropy  which becomes one of the most powerful tools in studying the mean curvature flow.
Recall the Huisken's entropy is defined as the integral of backward heat kernel:
\begin{equation}\label{Huisken}
\int_{M}(4\pi(T-t)) ^{-\frac{n}{2}}e^{-\frac{|x|^2}{4(T-t)}}d\mu_t.
\end{equation}
Huisken  proved his entropy (\ref{Huisken}) is monotone non-increasing in $t$ under the mean curvature flow
(\ref{mcf}). By using this monotonicity formula, Huisken  also showed that
Type I singularities of mean curvature flow are smooth asymptotically like
shrinking self-shinkers, characterized by the equation
\begin{equation}\label{shrinking_soliton}
\vec{\mathbf{H}}=-x^{\perp},
\end{equation}
where $x^{\perp}=\langle x,\nu \rangle \nu$. By using the Hamilton's Harnack estimate for the mean curvature flow \cite{Hamilton}, Huisken and Sinestrari (\cite{HS1}, \cite{HS2}) proved suitable rescaled sequence of the $n$-dimensional compact Type II mean curvature flow with positive mean curvature converges to a translating soliton  $\mathbb{R}^{n-k}\times \Sigma^k$, where $\Sigma^k$ is strictly convex.

In this paper, we study the singularity formation of the Type III mean curvature flow. For the entire graphs satisfying the linear growth condition
(\ref{linear_growth}) and in addition the estimate
\begin{equation}\label{growth_condition}
\langle x_0,\nu \rangle^2 \leq c(1+|x_0|^2)^{1-\delta}
\end{equation}
at time $t = 0$, where  $c<\infty$ and $\delta>0$, Ecker and Huisken \cite{EH}
 proved the solution to the normalized mean curvature flow
\begin{equation}\label{normalized_mcf}
\frac{\partial \overline{x}}{\partial s}=\vec{\overline{\mathbf{H}}}-\overline{x}
\end{equation}
with initial data $x_0$
converges as $s\to \infty$ to a self-expander. More precisely, in the case of entire graphs satisfying conditions (\ref{linear_growth}) and (\ref{growth_condition}), they  showed the following strong estimate
\begin{equation}
\sup\limits_{\overline{x}(M,s)}\frac{|\vec{\overline{\mathbf{H}}}+\overline{x}_s^{\perp}|^2 \overline{V}^2}{(1+\alpha|\overline{x}_s|^2)^{1-\epsilon}}\leq e^{-\beta s} \sup\limits_{x_0(M)}\frac{|\vec{\overline{\mathbf{H}}}+ x_{0}^{\perp}|^2 \overline{V}^2}{(1+\alpha| x_{0}|^2)^{1-\epsilon}},
\end{equation}
for all $\epsilon<\delta$
by applying the maximum principle under the flow (\ref{normalized_mcf}),
where $\overline{V}=\langle \overline{\nu},w\rangle^{-1}$, $\overline{\nu}$ is the unit normal vector of the graph and $w$ is a fixed unit vector such that $\langle \overline{\nu},w\rangle>0$.
In particular, this implies exponentially fast convergence on
compact subsets.

In order to study the
 singularity formation of  Type III mean curvature flow, we introduce some monotonicity formulas which are related to self-expanders.
We remark that there is a dual version of Huisken's entropy due to Ilmannen \cite{I3}:
\begin{equation}\label{dual_ver}
\frac{d}{dt}\int_{M}\rho d\mu_t=-\int_{M}|\vec{\mathbf{H}}-\frac{x^{\perp}}{2t+1}|^2\rho d\mu_t
\end{equation}
where  $\rho=(t+\frac{1}{2})^{-\frac{n}{2}}e^{\frac{|x|^2}{4(t+\frac{1}{2})}}$ and where the surfaces evolve by the mean curvature flow \eqref{mcf}.
Unfortunately, the monotonicity formula (\ref{dual_ver}) only makes sense on closed hypersurfaces.
Note that the density term $\rho d\mu_t$ is not pointwise monotone under the mean curvature flow (\ref{mcf}).
Actually, we can calculate that
$$\frac{\partial}{\partial t} \rho d\mu_t=-|\vec{\mathbf{H}}-\frac{x^{\perp}}{2t+1}|^2\rho d\mu_t -\left(\frac{n}{2t+1}+\frac{<x,\vec{\mathbf{H}}>}{2t+1}+\frac{|x^T|^2}{(2t+1)^2}\right)\rho d\mu_t.$$
If we could integrate above formula (which is for example the case when we are on compact surfaces), we would have found the second term on the right hand side is zero by the divergence theorem.

In this paper, we find that $\rho d\mu_t$ is monotone non-increasing under the following  flow, which we call {\it the drifting mean curvature flow},
\begin{equation}\label{tangent_mcf}
\frac{\partial x}{\partial t}=\vec{\mathbf{H}}+\frac{x^T}{2t+1}.
\end{equation}
 It turns out
the drifting mean curvature flow (\ref{tangent_mcf}) is equivalent to mean curvature flow (\ref{mcf}) up to tangent diffeomorphisms. We have the following result.
\begin{thm}\label{monotocity_tangent_mcf}
Let $x(\cdot ,t)$ be the solution to the drifting mean curvature flow (\ref{tangent_mcf}) with the initial data $x_0:M\to \mathbb{R}^{n+1}$ being an immersed hypersurface. Set $\rho=(t+\frac{1}{2})^{-\frac{n}{2}}e^{\frac{|x|^2}{4(t+\frac{1}{2})}}$. We have
\begin{equation}\label{monotocity_tangent_mcf_formula}
\frac{\partial}{\partial t} \rho d\mu_t=-|\vec{\mathbf{H}}-\frac{x^{\perp}}{2t+1}|^2 \rho d\mu_t.
\end{equation}
\end{thm}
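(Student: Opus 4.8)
The strategy is a direct computation of the two factors making up $\rho\,d\mu_t$, namely the area element $d\mu_t$ and the density $\rho$, along the drifting flow \eqref{tangent_mcf}, and then combining them. First I would record the variation of the induced metric $g_{ij}$ under a flow $\partial_t x = \vec{\mathbf{H}} + W$ with $W = \frac{x^T}{2t+1}$ a tangential field: the normal part $\vec{\mathbf{H}}$ contributes the usual $\partial_t g_{ij} = -2H h_{ij} + (\text{tangential terms})$, so that the normal part alone gives $\partial_t d\mu_t = -H^2 d\mu_t$, while the purely tangential piece $W$ contributes $\partial_t d\mu_t = (\operatorname{div}_M W)\, d\mu_t$. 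Hence
\[
\frac{\partial}{\partial t} d\mu_t = \left(-H^2 + \operatorname{div}_M\!\frac{x^T}{2t+1}\right) d\mu_t .
\]
Here I would use $x^T = x - \langle x,\nu\rangle\nu$, the identity $\operatorname{div}_M x = n - \langle x,\nu\rangle H$ (tracing the Gauss formula $\nabla^M_i x = e_i - h_{ij}\langle x,\nu\rangle e_j$ is not quite it — more precisely $\operatorname{div}_M x = n + \langle x, \vec{\mathbf{H}}\rangle = n - \langle x,\nu\rangle H$), and $\operatorname{div}_M(\langle x,\nu\rangle \nu) = \langle x,\nu\rangle\operatorname{div}_M\nu + \langle \nabla^M\langle x,\nu\rangle,\nu\rangle = -\langle x,\nu\rangle H$ since $\operatorname{div}_M \nu = -H$ and the second term vanishes. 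Therefore $\operatorname{div}_M x^T = n - \langle x,\nu\rangle H + \langle x,\nu\rangle H = n$... I should double-check signs against the conventions $\vec{\mathbf{H}} = -H\nu$; in any case the point is that $\operatorname{div}_M x^T$ reduces to $n + \langle x,\vec{\mathbf{H}}\rangle$, so
\[
\frac{\partial}{\partial t} d\mu_t = \left(-H^2 + \frac{n + \langle x,\vec{\mathbf{H}}\rangle}{2t+1}\right) d\mu_t .
\]

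Next I would compute $\partial_t \rho$ for $\rho = (t+\tfrac12)^{-n/2} e^{|x|^2/(4(t+\frac12))}$. Writing $f = \tfrac{|x|^2}{4(t+\frac12)} - \tfrac n2\log(t+\tfrac12)$ so $\rho = e^f$, one has $\partial_t f = \tfrac{\langle x, \partial_t x\rangle}{2(t+\frac12)} - \tfrac{|x|^2}{4(t+\frac12)^2} - \tfrac{n}{2(t+\frac12)}$. Substituting $\partial_t x = \vec{\mathbf{H}} + \frac{x^T}{2t+1}$ and using $2t+1 = 2(t+\tfrac12)$, the term $\langle x, \tfrac{x^T}{2t+1}\rangle = \tfrac{|x^T|^2}{2t+1}$ appears, and $\langle x,\vec{\mathbf{H}}\rangle = \langle x^\perp, \vec{\mathbf{H}}\rangle$. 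Collecting,
\[
\frac{\partial}{\partial t}\rho = \left(\frac{\langle x,\vec{\mathbf{H}}\rangle}{2t+1} + \frac{|x^T|^2}{(2t+1)^2} - \frac{|x|^2}{(2t+1)^2} - \frac{n}{2t+1}\right)\rho ,
\]
and since $|x|^2 = |x^T|^2 + |x^\perp|^2$, the $|x^T|^2$ terms cancel, leaving $\partial_t\rho = \big(\tfrac{\langle x,\vec{\mathbf{H}}\rangle}{2t+1} - \tfrac{|x^\perp|^2}{(2t+1)^2} - \tfrac n{2t+1}\big)\rho$.

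Finally I would add the two contributions via the product rule $\partial_t(\rho\,d\mu_t) = (\partial_t\rho)\,d\mu_t + \rho\,\partial_t d\mu_t$. The terms $\pm\tfrac{n}{2t+1}$ cancel, and the two copies of $\tfrac{\langle x,\vec{\mathbf{H}}\rangle}{2t+1}$ add to $\tfrac{2\langle x,\vec{\mathbf{H}}\rangle}{2t+1} = -\tfrac{2\langle x^\perp,H\nu\rangle}{2t+1}$, while we are left with $-H^2$ and $-\tfrac{|x^\perp|^2}{(2t+1)^2}$. Recognizing $|x^\perp|^2 = \langle x,\nu\rangle^2$ and $x^\perp = \langle x,\nu\rangle\nu$, these assemble exactly into the perfect square $-\big|H\nu - \tfrac{x^\perp}{2t+1}\big|^2 = -\big|\vec{\mathbf{H}} - \tfrac{x^\perp}{2t+1}\big|^2$, giving \eqref{monotocity_tangent_mcf_formula}. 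The main point — and the only place any real cancellation is needed — is the interplay between the tangential drift term $\frac{x^T}{2t+1}$, which on the one hand creates the $\operatorname{div}_M x^T = n + \langle x,\vec{\mathbf{H}}\rangle$ contribution in the area element and on the other hand feeds the $\tfrac{|x^T|^2}{(2t+1)^2}$ term into $\partial_t\rho$: it is precisely these two effects that kill the non-square ``error'' terms appearing in the displayed formula for $\partial_t(\rho\,d\mu_t)$ under the undrifted flow \eqref{mcf}. No maximum principle or integration is used; everything is pointwise, which is the whole advantage over \eqref{dual_ver}.
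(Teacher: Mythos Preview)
Your plan is correct and follows essentially the same route as the paper: compute $\partial_t d\mu_t$ and $\partial_t\rho$ separately under the drifting flow and combine them into the perfect square. The only cosmetic difference is that the paper differentiates $g_{ij}=\partial_i x\cdot\partial_j x$ directly to obtain $\partial_t d\mu_t=(-|\vec{\mathbf{H}}|^2+\tfrac{n}{2t+1}+\tfrac{\langle x^\perp,\vec{\mathbf{H}}\rangle}{2t+1})\,d\mu_t$, whereas you invoke the general principle that a tangential velocity $W$ contributes $\operatorname{div}_M W$ to the area element and then compute $\operatorname{div}_M x^T=n+\langle x,\vec{\mathbf{H}}\rangle$; these are the same calculation, and your intermediate sign wobble (e.g.\ $\operatorname{div}_M x=n$, not $n-\langle x,\nu\rangle H$) is harmless since you land on the right formula.
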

Rescale the flow (\ref{tangent_mcf}) as
\begin{equation}\label{scaling}
\widetilde{x}(\cdot,s)=\frac{1}{\sqrt{2t+1}}x(\cdot,t),
\end{equation}
where $s$ is given by $s=\frac{1}{2}\log(2t+1)$. The normalized drifting mean curvature flow  then becomes
\begin{equation}\label{normalized_tangent_mcf}
\frac{\partial \widetilde{x}}{\partial s}=\vec{\widetilde{\mathbf{H}}}-\widetilde{x}^{\perp},\ \ s\geq 0.
\end{equation}
 Moreover, $(t+\frac{1}{2})^{-\frac{n}{2}}e^{\frac{|x|^2}{4(t+\frac{1}{2})}}d\mu_t$ becomes $e^{\frac{1}{2}|\widetilde{x}|^2}d\widetilde{\mu}_s$ under this rescaling. Note the stationary solutions to the normalized drifting mean
curvature flow (\ref{normalized_tangent_mcf}) are exactly self-expanders which are characterized by the
equation
\begin{equation}\label{expanding_soliton}
\vec{\mathbf{H}}=x^{\perp}.
\end{equation}
That is why we consider the normalized drifting mean curvature flow (\ref{normalized_tangent_mcf}).
An immediate corollary of Theorem \ref{monotocity_tangent_mcf} is the following monotonicity property
for the normalized drifting mean curvature flow.

\begin{cor}\label{monotocity_normalized_tangent_mcf}
Let $\widetilde{x}(\cdot ,s)$ be the solution to the normalized drifting mean curvature flow (\ref{normalized_tangent_mcf}) with the initial data $x_0:M\to\mathbb{R}^{n+1}$  being an immersed hypersurface. Set $\widetilde{\rho}=e^{\frac{1}{2}|\widetilde{x}|^2}$. We have
\begin{equation}\label{monotocity_normalized_tangent_mcf_formula}
\frac{\partial}{\partial s} (\widetilde{\rho} d\widetilde{\mu}_s)=-|\vec{\widetilde{\mathbf{H}}}-\widetilde{x}^{\perp}|^2 \widetilde{\rho} d\widetilde{\mu}_s.
\end{equation}
\end{cor}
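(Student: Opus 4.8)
The plan is to read off Corollary~\ref{monotocity_normalized_tangent_mcf} directly from Theorem~\ref{monotocity_tangent_mcf} by transporting the monotonicity formula (\ref{monotocity_tangent_mcf_formula}) through the parabolic rescaling (\ref{scaling}). First I would record how each ingredient of (\ref{monotocity_tangent_mcf_formula}) transforms under $\widetilde{x}=(2t+1)^{-1/2}x$, $s=\tfrac12\log(2t+1)$. Since $t+\tfrac12=\tfrac{2t+1}{2}$ and $|\widetilde{x}|^2=|x|^2/(2t+1)$, one has $\tfrac{|x|^2}{4(t+1/2)}=\tfrac12|\widetilde{x}|^2$; combined with $d\widetilde{\mu}_s=(2t+1)^{-n/2}d\mu_t$ (the induced metric dilates by the factor $(2t+1)^{-1}$), this gives $\rho\,d\mu_t=2^{n/2}\,\widetilde{\rho}\,d\widetilde{\mu}_s$, i.e. the weighted measures agree up to the dimensional constant $2^{n/2}$, which will be harmless. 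Next, because the unit normal is scale invariant while the second fundamental form scales like $(2t+1)^{-1/2}$, one gets $\vec{\mathbf{H}}=(2t+1)^{-1/2}\vec{\widetilde{\mathbf{H}}}$, and from $\langle x,\nu\rangle=(2t+1)^{1/2}\langle\widetilde{x},\widetilde{\nu}\rangle$ one gets $\tfrac{x^{\perp}}{2t+1}=(2t+1)^{-1/2}\widetilde{x}^{\perp}$, whence
\begin{equation*}
\Big|\vec{\mathbf{H}}-\frac{x^{\perp}}{2t+1}\Big|^2=\frac{1}{2t+1}\,\big|\vec{\widetilde{\mathbf{H}}}-\widetilde{x}^{\perp}\big|^2 .
\end{equation*}

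For the time derivative I would use $\frac{dt}{ds}=2t+1$, so that $\frac{\partial}{\partial t}=\frac{1}{2t+1}\frac{\partial}{\partial s}$. Substituting the three transformation rules above into (\ref{monotocity_tangent_mcf_formula}) turns it into
\begin{equation*}
\frac{1}{2t+1}\frac{\partial}{\partial s}\big(2^{n/2}\widetilde{\rho}\,d\widetilde{\mu}_s\big)=-\frac{1}{2t+1}\,\big|\vec{\widetilde{\mathbf{H}}}-\widetilde{x}^{\perp}\big|^2\,2^{n/2}\widetilde{\rho}\,d\widetilde{\mu}_s ,
\end{equation*}
and cancelling the common factor $\tfrac{2^{n/2}}{2t+1}$ yields precisely (\ref{monotocity_normalized_tangent_mcf_formula}). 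Along the way I would also verify — this is the one point that is not pure bookkeeping — that $\widetilde{x}(\cdot,s)$ really solves the normalized drifting flow (\ref{normalized_tangent_mcf}): differentiating $\widetilde{x}=e^{-s}x(\cdot,t(s))$ in $s$ gives $-\widetilde{x}+(2t+1)e^{-s}\big(\vec{\mathbf{H}}+\tfrac{x^T}{2t+1}\big)$, which by the same scaling identities for $\vec{\mathbf{H}}$ and for $x^T=(2t+1)^{1/2}\widetilde{x}^{T}$ equals $-\widetilde{x}+\vec{\widetilde{\mathbf{H}}}+\widetilde{x}^{T}=\vec{\widetilde{\mathbf{H}}}-\widetilde{x}^{\perp}$. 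Since this is already asserted in the text preceding the corollary, it may simply be cited rather than reproved.

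The argument has no genuine obstacle: it is a change of variables, and the only thing requiring care is keeping track of the scaling weights of the geometric quantities — the mean curvature vector, the normal and tangential parts of the position vector, and the induced area measure — together with the innocuous constant $2^{n/2}$, all of which conspire to cancel. As an alternative one could bypass the rescaling altogether and rerun the computation behind Theorem~\ref{monotocity_tangent_mcf} directly under (\ref{normalized_tangent_mcf}), computing $\frac{\partial}{\partial s}d\widetilde{\mu}_s$ and $\frac{\partial}{\partial s}\widetilde{\rho}$ separately and adding them; the divergence-type term $\mathrm{div}(\widetilde{x}^{T})$ produced this way is exactly what gets absorbed, just as in the proof of Theorem~\ref{monotocity_tangent_mcf}.
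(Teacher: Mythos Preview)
Your proposal is correct and follows essentially the same route as the paper's proof: both transport the monotonicity formula of Theorem~\ref{monotocity_tangent_mcf} through the parabolic rescaling (\ref{scaling}) via the chain rule $\frac{\partial}{\partial s}=(2t+1)\frac{\partial}{\partial t}$, with the paper simply compressing into two lines the scaling bookkeeping that you spell out term by term. The constant $2^{n/2}$ you track explicitly is handled implicitly in the paper's display, and your verification that $\widetilde{x}$ solves (\ref{normalized_tangent_mcf}) is indeed asserted earlier in the text and need only be cited.
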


Next we introduce a global monotonicity formula for the normalized drifting mean curvature flow (\ref{normalized_tangent_mcf}). The idea is the following: since $\frac{\partial}{\partial s} (\widetilde{\rho} d\widetilde{\mu}_s)\leq 0$ pointwise, we can choose a time-independent positive function $f_0$ such that $\int_{M}\widetilde{\rho} f_0 d\widetilde{\mu}_s$ is finite at $s=0$, and therefore $\int_{M}\widetilde{\rho} f_0 d\widetilde{\mu}_s$ is finite for all $s\geq 0$ since $\widetilde{\rho}f_0 d\widetilde{\mu}_s$ is monotone nonincreasing.

\begin{thm}\label{global_monotocity_normalized_tangent_mcf}
Let $\widetilde{x}(\cdot ,s)$ be the solution to the normalized drifting mean curvature flow (\ref{normalized_tangent_mcf}) with the initial data $x_{0}:M\to\mathbb{R}^{n+1}$  being an immersed hypersurface.
Assume that $\int_M e^{-\frac{1}{2}|x_0|^2}d \mu_0=C_0<\infty$.
Then
\begin{equation}\label{eq_a}
\int_M e^{\frac{1}{2}|\widetilde{x}|^2-|x_{0}|^2}d\widetilde{\mu}_s\leq C_0, \qquad \mbox{for all} \qquad s\ge 0,
\end{equation}
where the term
$ e^{\frac{1}{2}|\widetilde{x}|^2-|x_{0}|^2}d\widetilde{\mu}_s$ means $ e^{\frac{1}{2}|\widetilde{x}|^2(p,s)-|x_{0}(p)|^2}d\widetilde{\mu}_s(p)$ for $p\in M$ and
\begin{equation}\label{eq_b}
\int^{\infty}_{0}\int_M |\vec{\widetilde{\mathbf{H}}}-\widetilde{x}^{\perp}|^2 e^{\frac{1}{2}|\widetilde{x}|^2-|x_{0}|^2} d\widetilde{\mu}_s\leq C_0.
\end{equation}
Moreover, we have the following monotonicity formula
\begin{equation}\label{global_monotocity_normalized_tangent_mcf_formula}
\frac{d}{d s}\int_Me^{\frac{1}{2}|\widetilde{x}|^2-|x_{0}|^2}d\widetilde{\mu}_s=-\int_M|\vec{\widetilde{\mathbf{H}}}-\widetilde{x}^{\perp}|^2 e^{\frac{1}{2}|\widetilde{x}|^2-|x_{0}|^2} d\widetilde{\mu}_s.
\end{equation}
The theorem also holds when we replace the term $e^{-|x_{0}|^2}$ in (\ref{eq_a})-(\ref{global_monotocity_normalized_tangent_mcf_formula}) by a time-independent positive function $f_0$ satisfying
\begin{equation}\label{generalized_condition}
\int_M e^{\frac{1}{2}|x_0|^2}f_0d \mu_0<\infty.
\end{equation}
\end{thm}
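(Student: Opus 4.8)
The strategy is to obtain all three conclusions by multiplying the pointwise identity of Corollary~\ref{monotocity_normalized_tangent_mcf} by the fixed weight $f_0=e^{-|x_0|^2}$ and then integrating over $M$; the substance of the proof is the interchange of $\frac{d}{ds}$ with $\int_M$ on the noncompact $M$, and the point is that since $f_0$ is independent of $s$, the pointwise inequality $\partial_s(\widetilde{\rho}f_0\,d\widetilde{\mu}_s)\le 0$ carries its own time-independent integrable majorant. Concretely, fix the reference measure by writing $d\widetilde{\mu}_s=J(p,s)\,d\widetilde{\mu}_0$ with $J>0$ smooth and $J(p,0)=1$ (the Jacobian of the flow map on the interval of existence), divide (\ref{monotocity_normalized_tangent_mcf_formula}) by $d\widetilde{\mu}_0$, and multiply by $f_0$. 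Setting
\[
g(p,s):=e^{\frac12|\widetilde{x}(p,s)|^2}\,f_0(p)\,J(p,s)\ge 0,
\]
this yields the pointwise ODE $\partial_s g(p,s)=-\,|\vec{\widetilde{\mathbf{H}}}-\widetilde{x}^{\perp}|^2(p,s)\,g(p,s)\le 0$. By the rescaling (\ref{scaling}) we have $\widetilde{x}(\cdot,0)=x_0$ and $d\widetilde{\mu}_0=d\mu_0$, so $g(p,0)=e^{\frac12|x_0|^2}f_0=e^{-\frac12|x_0|^2}$, whose integral over $M$ is $C_0<\infty$ by hypothesis, and $g(p,s)\,d\widetilde{\mu}_0=e^{\frac12|\widetilde{x}|^2-|x_0|^2}\,d\widetilde{\mu}_s$.

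From $\partial_s g\le 0$ we get $0\le g(p,s)\le g(p,0)$ for all $s\ge 0$ and all $p\in M$; integrating against $d\widetilde{\mu}_0$ gives (\ref{eq_a}), and also shows $\Phi(s):=\int_M g(p,s)\,d\widetilde{\mu}_0$ is finite and non-increasing. Next, integrating the pointwise ODE from $0$ to $s$ gives $g(p,0)-g(p,s)=\int_0^s|\vec{\widetilde{\mathbf{H}}}-\widetilde{x}^{\perp}|^2 g\,d\sigma$; all terms are nonnegative, so Tonelli's theorem permits integration over $M$ together with exchange of the orders of integration, yielding
\[
\int_0^s\!\!\int_M|\vec{\widetilde{\mathbf{H}}}-\widetilde{x}^{\perp}|^2 g\,d\widetilde{\mu}_0\,d\sigma=C_0-\Phi(s)\le C_0.
\]
Letting $s\to\infty$ and rewriting the left side in terms of $d\widetilde{\mu}_s$ gives (\ref{eq_b}). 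The same computation on an interval $[s_1,s_2]$ gives $\Phi(s_1)-\Phi(s_2)=\int_{s_1}^{s_2}\Psi(\sigma)\,d\sigma$ with $\Psi(\sigma):=\int_M|\vec{\widetilde{\mathbf{H}}}-\widetilde{x}^{\perp}|^2 g\,d\widetilde{\mu}_0$, which lies in $L^1_{\mathrm{loc}}([0,\infty))$ by (\ref{eq_b}); hence $\Phi$ is locally absolutely continuous with $\Phi'(s)=-\Psi(s)$, i.e.\ (\ref{global_monotocity_normalized_tangent_mcf_formula}). Dominated convergence with the majorant $g(\cdot,0)$ shows $\Phi$ is continuous, and to upgrade $\Phi'=-\Psi$ from a.e.\ $s$ to every $s$ one checks continuity of $\Psi$ by dominated convergence on the compact time intervals where the flow is smooth. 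For the generalized statement one repeats everything verbatim with $f_0$ any time-independent positive function with $\int_M e^{\frac12|x_0|^2}f_0\,d\mu_0<\infty$, which is exactly the condition making $g(\cdot,0)$ lie in $L^1(M,d\widetilde{\mu}_0)$.

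The only real obstacle is the noncompactness of $M$: a priori neither the monotonicity of the integral nor the differentiation under the integral sign is legitimate. What makes the argument go through is the observation that inserting the \emph{time-independent} weight $f_0$ both preserves the pointwise monotonicity of Corollary~\ref{monotocity_normalized_tangent_mcf} and produces an $s$-independent dominating function $g(\cdot,0)=e^{\frac12|x_0|^2}f_0\in L^1(M,d\widetilde{\mu}_0)$, which is precisely what is needed to apply Tonelli's theorem and dominated convergence; no decay estimate on $|\vec{\widetilde{\mathbf{H}}}-\widetilde{x}^{\perp}|$ or other geometric control on $x(\cdot,t)$ is required.
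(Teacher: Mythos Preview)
Your proof is correct and follows essentially the same approach as the paper: both exploit the pointwise identity of Corollary~\ref{monotocity_normalized_tangent_mcf}, multiply by the time-independent weight $f_0$, and use the resulting pointwise monotonicity to produce an integrable, $s$-independent majorant $g(\cdot,0)=e^{\frac12|x_0|^2}f_0$. The only cosmetic difference is that the paper integrates first over a compact $\Omega\subset M$ and then lets $\Omega\to M$ by monotone convergence, whereas you write $d\widetilde{\mu}_s=J(p,s)\,d\widetilde{\mu}_0$ and invoke Tonelli and dominated convergence directly against the fixed reference measure; the two devices are interchangeable here.
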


We remark that the motivation for such monotonicity formulas originates from Perelman's work on the Ricci flow \cite{P1}.  Perelman \cite{P1} introduced the reduced volume  $\int_{M}\tau^{-\frac{n}{2}}e^{-l(y,\tau)}dvol(y)$ which is monotone non-increasing under the backward Ricci flow, where $l(y,\tau)$ is the reduced length. However the density term $\tau^{-\frac{n}{2}}e^{-l(y,\tau)}dvol(y)$ is not pointwise monotone non-increasing quantity under the backward Ricci flow.  Perelman showed that 
the density term  is monotone non-increasing under the backward Ricci flow along the $\mathcal{L}$-geodesic.
 
Feldman, Ilmanen, Ni \cite{FIN} also observed that there is a dual version of Perelman's reduced entropy  related to the Ricci flow expanders and defined by $\int_{M} t^{-\frac{n}{2}}e^{l_+(y,t)}dvol(y)$, along the forward Ricci flow. It only makes sense on closed manifolds. The first author and Zhu \cite{CZ} observed the density term $t^{-\frac{n}{2}} e^{l_+(\gamma_V(t),t)} \mathcal{L_+}J_V(t)dV^n$ is pointwise monotone non-increasing along the $\mathcal{L}_+$-geodesics under the forward Ricci flow in the similar way as in \cite{P1}. Hence one can add the weight and get that  
\[\int_{T_xM^n} t^{-\frac{n}{2}}e^{l_+(\gamma_V(t),t)}\mathcal{L_+}J_V(t)e^{-2|V|^2_{g(0)}}dx_{g(0)}(V),\]
is well-defined on noncompact manifolds and monotone non-increasing under the forward Ricci flow (see \cite{CZ}).

As an immediate application of Theorem \ref{global_monotocity_normalized_tangent_mcf}, using (\ref{eq_b}), we have the following result.
\begin{cor}\label{asymptotically_longtime}
Let  $\widetilde{x}(\cdot,s)$ be the normalized drifting mean curvature flow that exists for $s\in [0,\infty)$, with initial data $ x_{0}:M\to \mathbb{R}^{n+1}$ being an immersed hypersurface and satisfying  $\int_M e^{-\frac{1}{2}| x_{0}|^2}d \mu_0=C_0<\infty$.
Then the normalized drifting mean curvature flow (\ref{normalized_tangent_mcf}) asymptotically looks like the self-expander as time approaches infinity in the sense
$$\lim_{\tau,t\to\infty}\int_{\tau}^t \int_M |\vec{\widetilde{\mathbf{H}}}-\widetilde{x}^{\perp}|^2 e^{\frac{1}{2}|\widetilde{x}|^2-| x_{0}|^2} d\widetilde{\mu}_s\, ds = 0.$$
There exists a sequence of times $s_i\to \infty$ such that
\[\lim_{i\to\infty} \int_M |\vec{\widetilde{\mathbf{H}}}-\widetilde{x}^{\perp}|^2 e^{\frac{1}{2}|\widetilde{x}|^2-| x_{0}|^2} d\widetilde{\mu}_{s_i} = 0,\]\
where $\widetilde{x}$ stands for $\widetilde{x}(\cdot,s_i)$.
\end{cor}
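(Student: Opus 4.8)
The plan is to deduce everything directly from the global dissipation estimate (\ref{eq_b}) of Theorem~\ref{global_monotocity_normalized_tangent_mcf}. Introduce the shorthand
\[
g(s) := \int_M |\vec{\widetilde{\mathbf{H}}}-\widetilde{x}^{\perp}|^2 e^{\frac{1}{2}|\widetilde{x}|^2-|x_{0}|^2} d\widetilde{\mu}_s \ \ge\ 0,
\]
which is well defined and finite for every $s\ge 0$ by (\ref{eq_a}) together with the pointwise monotonicity in Corollary~\ref{monotocity_normalized_tangent_mcf}. Then (\ref{eq_b}) says exactly that $\int_0^\infty g(s)\,ds\le C_0<\infty$, i.e.\ $g$ is a nonnegative function on the half-line $[0,\infty)$ whose improper integral converges.

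For the first assertion I would invoke the Cauchy criterion for a convergent improper integral. Concretely, set $F(s):=\int_M e^{\frac{1}{2}|\widetilde{x}|^2-|x_{0}|^2}d\widetilde{\mu}_s$. By the monotonicity formula (\ref{global_monotocity_normalized_tangent_mcf_formula}) the function $F$ is nonincreasing in $s$, and by (\ref{eq_a}) it takes values in $[0,C_0]$; hence $F(s)$ decreases to a finite limit $F_\infty\ge 0$ as $s\to\infty$. Integrating (\ref{global_monotocity_normalized_tangent_mcf_formula}) from $\tau$ to $t$ with $\tau<t$ gives $\int_\tau^t g(s)\,ds = F(\tau)-F(t)$, and letting $\tau,t\to\infty$ the right-hand side tends to $F_\infty-F_\infty=0$. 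This yields $\lim_{\tau,t\to\infty}\int_\tau^t\int_M |\vec{\widetilde{\mathbf{H}}}-\widetilde{x}^{\perp}|^2 e^{\frac{1}{2}|\widetilde{x}|^2-|x_{0}|^2}d\widetilde{\mu}_s\,ds=0$.

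For the second assertion I would argue by contradiction from $\int_0^\infty g(s)\,ds<\infty$: if there were no sequence $s_i\to\infty$ with $g(s_i)\to 0$, then $\liminf_{s\to\infty}g(s)=:2\epsilon>0$, so there is $S<\infty$ with $g(s)\ge\epsilon$ for all $s\ge S$, forcing $\int_0^\infty g(s)\,ds\ge\int_S^\infty\epsilon\,ds=\infty$, a contradiction. Hence $\liminf_{s\to\infty}g(s)=0$, and choosing $s_i\to\infty$ along which $g(s_i)\to 0$ gives the claimed sequence.

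I do not anticipate any genuine obstacle in this argument: both steps are elementary facts about integrable functions on a half-line, and the substantive analytic input—the uniform bound on the total dissipation and the pointwise/global monotonicity—has already been carried out in Theorem~\ref{global_monotocity_normalized_tangent_mcf}. The only point worth keeping in mind is that finiteness of all the spatial integrals involved is not an extra hypothesis to be checked here but follows from $\int_M e^{-\frac{1}{2}|x_0|^2}d\mu_0=C_0<\infty$ via Theorem~\ref{global_monotocity_normalized_tangent_mcf}, so the manipulations above are legitimate.
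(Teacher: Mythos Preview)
Your proposal is correct and takes essentially the same approach as the paper: the paper simply states that the corollary ``follows from (\ref{eq_b}) and (\ref{global_monotocity_normalized_tangent_mcf_formula}) directly'' without further elaboration, and you have filled in precisely the elementary tail-of-a-convergent-integral arguments that justify this.
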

\begin{rem}
If the mean curvature flow (\ref{mcf}) exists for all times, then the corresponding normalized drifting mean curvature flow (\ref{normalized_tangent_mcf}) also exists for all times.
Since the normalized drifting mean curvature flow (\ref{normalized_tangent_mcf}) is equivalent to the mean curvature flow (\ref{mcf}) up to tangent diffeomorphisms  and rescaling given by \ref{scaling}, we can view Corollary\ref{asymptotically_longtime} giving us the asymptotic behavior at infinite time for the mean curvature flow in the distribution sense.
\end{rem}

Next we use the monotonicity formula (\ref{global_monotocity_normalized_tangent_mcf_formula}) to study the asymptotic behavior of Type III mean curvature flow. We first have the following.

\begin{thm}\label{local_asymptotically}
Let $x( \cdot,t):M\to \mathbb{R}^{n+1}$ be the Type III solution to the mean curvature flow (\ref{mcf})  with initial data $x_{0}:M\to \mathbb{R}^{n+1}$ being a complete immersed hypersurface and satisfying  $\int_M e^{-\frac{1}{2}|x_{0}|^2}d\mu_{0}<\infty$. Assume that $\widetilde{x}(\cdot,s)$ is the corresponding normalized drifting mean curvature flow (\ref{normalized_tangent_mcf}) for $x( \cdot,t)$.
Denote by $ B(q,R)$ the ball in  $\mathbb{R}^{n+1}$ for some $q\in \mathbb{R}^{n+1}$ and $R>0$. If there exists an $s_N>0$ such that
$\widetilde{x}(M,s)\cap B(q,R)\neq \emptyset$ for $s>s_N$ and
\begin{equation}\label{control_condition}
|x_0(p)|\leq C_R
\end{equation}
 for any $p$ such that $ \widetilde{x}(p,s)\in B(q,R)$, where $C_R$ is a constant dependent on $R$ and independent of time $s$,
then $\widetilde{x}(M,s)\cap B(q,R)$ subconverges smoothly to the self-expander in $B(q,R)$.
\end{thm}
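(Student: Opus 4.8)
The plan is to combine the global monotonicity of Theorem~\ref{global_monotocity_normalized_tangent_mcf} with the uniform curvature bounds coming from the Type III hypothesis, and to localize by using that the weight $e^{\frac12|\widetilde{x}|^2-|x_0|^2}$ is bounded below on the part of $M$ that is mapped into $B(q,R)$ --- which is exactly what the control condition (\ref{control_condition}) provides.

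First I would record uniform bounds on the geometry of the rescaled flow. Since $\widetilde{x}(\cdot,s)=(2t+1)^{-1/2}x(\cdot,t)$ with $s=\frac12\log(2t+1)$, the second fundamental forms satisfy $|\widetilde{A}(\cdot,s)|^2=(2t+1)|A(\cdot,t)|^2$, so Definition~\ref{def_Type_III} yields a constant $C_1$ and an $s_0>0$ with $\sup_M|\widetilde{A}(\cdot,s)|\le C_1$ for $s\ge s_0$. Because (\ref{tangent_mcf}) differs from (\ref{mcf}) only by tangential diffeomorphisms and (\ref{normalized_tangent_mcf}) is the parabolic rescaling (\ref{scaling}) of it, the interior estimates of Ecker and Huisken \cite{EH1} apply and upgrade this to $\sup_M|\widetilde{\nabla}^m\widetilde{A}(\cdot,s)|\le C_m$ for every $m$ and all $s\ge s_0+1$.

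Next, the assumption $\int_M e^{-\frac12|x_0|^2}d\mu_0=:C_0<\infty$ is precisely the hypothesis of Theorem~\ref{global_monotocity_normalized_tangent_mcf}, so $\Phi(s):=\int_M e^{\frac12|\widetilde{x}|^2-|x_0|^2}d\widetilde{\mu}_s$ is finite, nonincreasing, and $\le C_0$, and by Corollary~\ref{asymptotically_longtime} there is a sequence $s_i\to\infty$, which we may take $\ge\max\{s_0+1,s_N\}$, along which $\int_M|\vec{\widetilde{\mathbf{H}}}-\widetilde{x}^{\perp}|^2 e^{\frac12|\widetilde{x}|^2-|x_0|^2}d\widetilde{\mu}_{s_i}\to 0$. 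Set $M_s^R:=\{p\in M:\widetilde{x}(p,s)\in B(q,R)\}$. On $M_s^R$ we have $\frac12|\widetilde{x}|^2\ge 0$ and, by (\ref{control_condition}), $|x_0|\le C_R$, so $e^{\frac12|\widetilde{x}|^2-|x_0|^2}\ge e^{-C_R^2}$ there. Hence $\widetilde{\mu}_s(M_s^R)\le e^{C_R^2}\Phi(s)\le e^{C_R^2}C_0$ for all $s$, and along $s=s_i$, $\int_{M_{s_i}^R}|\vec{\widetilde{\mathbf{H}}}-\widetilde{x}^{\perp}|^2 d\widetilde{\mu}_{s_i}\le e^{C_R^2}\int_M|\vec{\widetilde{\mathbf{H}}}-\widetilde{x}^{\perp}|^2 e^{\frac12|\widetilde{x}|^2-|x_0|^2}d\widetilde{\mu}_{s_i}\to 0$.

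Finally I would pass to the limit. The bounds on $|\widetilde{A}|$ and all its derivatives together with the uniform area bound $\widetilde{\mu}_{s_i}(M_{s_i}^R)\le e^{C_R^2}C_0$ allow, via the compactness theorem for immersed hypersurfaces with locally bounded geometry and area, to pass to a further subsequence along which the portions of $\widetilde{x}(M,s_i)$ lying in $B(q,R)$ converge smoothly to a smooth immersed hypersurface $\Sigma_\infty\subset B(q,R)$, nonempty because it contains the limit of points $\widetilde{x}(p_i,s_i)\in B(q,R)$ furnished by the hypothesis $\widetilde{x}(M,s)\cap B(q,R)\neq\emptyset$. Along this convergence $\vec{\widetilde{\mathbf{H}}}$, $\widetilde{x}^{\perp}$ and $d\widetilde{\mu}_{s_i}$ converge to the corresponding data of $\Sigma_\infty$, so lower semicontinuity of the integral gives $\int_{\Sigma_\infty}|\vec{\mathbf{H}}-x^{\perp}|^2 d\mu=0$, and by continuity $\vec{\mathbf{H}}=x^{\perp}$ on $\Sigma_\infty$; that is, $\Sigma_\infty$ is a self-expander in $B(q,R)$. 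The main obstacle is this last step: making rigorous the smooth subconvergence of the possibly non-embedded hypersurfaces, dealing with the multiplicity of the limit, and verifying that the limiting measure and the limit of $|\vec{\widetilde{\mathbf{H}}}-\widetilde{x}^{\perp}|^2$ combine as claimed; the passage from the single bound on $|\widetilde{A}|$ to uniform-in-$s$ bounds on all $|\widetilde{\nabla}^m\widetilde{A}|$ via interior estimates for the rescaled flow is more routine but still needs care.
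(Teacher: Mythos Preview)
Your proposal is correct and follows essentially the same route as the paper: Type III plus Ecker--Huisken interior estimates give uniform bounds on $|\widetilde{\nabla}^m\widetilde{A}|$; the control condition (\ref{control_condition}) bounds the weight $e^{\frac12|\widetilde{x}|^2-|x_0|^2}$ below on $M_s^R$, yielding the local area bound; Langer's compactness (which is what resolves the ``main obstacle'' you flag) then gives smooth subsequential limits, and the monotonicity forces the limit to be a self-expander. The only cosmetic difference is that you first pick $s_i$ via Corollary~\ref{asymptotically_longtime} and then extract a convergent subsequence, whereas the paper takes an arbitrary sequence, extracts a convergent subsequence by Langer, and uses the vanishing of the time-integrated tail $\int_s^\infty\int_M(\cdots)\,d\tau$ to identify the limit---this variant shows every subsequential limit is an expander, but for the stated subconvergence your ordering is equally valid.
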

\begin{rem}\label{rmk}
\begin{enumerate}
\item
Let $N_s(q,R)=\widetilde{x}^{-1}(\widetilde{x}(M,s)\cap B(q,R))$.
Note that  condition (\ref{control_condition}) is needed when using the monotonicity formula (\ref{global_monotocity_normalized_tangent_mcf}) since the weighted term $e^{-| x_{0}|^2}$ may go to zero in $N_s(q,R)$ (see the proof of Theorem \ref{local_asymptotically}).

\item In view of (\ref{generalized_condition}) and the proof of Theorem \ref{local_asymptotically}, we can see that Theorem \ref{local_asymptotically} still holds if the condition (\ref{control_condition}) is generalized by
\begin{equation}\label{control_condition_2}
f_0(p)\geq c_0(R)>0
\end{equation}
 for $p \in N_s(q,R)$, where $f_0$ satisfies (\ref{generalized_condition}) in Theorem \ref{global_monotocity_normalized_tangent_mcf} and $c_0(R)$ is constant dependent on $R$ and independent of time.

 \item
 The condition (\ref{control_condition}) or (\ref{control_condition_2}) can not be removed due to an example by Huisken and Ecker (\cite{EH}) (see Remark \ref{remark}).
\end{enumerate}
\end{rem}

We will show  there exists an $R_0$ such that $\widetilde{x}(\cdot,s) \cap B(o,R_0) \neq \emptyset$ for $s$ sufficiently large.
As an immediate corollary of this fact and Theorem \ref{local_asymptotically}, we have the following.
\begin{cor}\label{asymptotically}
Let $x( \cdot,t)$ and  $\widetilde{x}(\cdot,s)$ be as in Theorem \ref{local_asymptotically}. Then if for any $R > 0$ we have
\begin{equation}\label{control_condition2}
|x_0(p)|\leq C_R,
\end{equation}
 for any $p$ such that $ \widetilde{x}(p,s)\in B(o,R)$, where $C_R$ is a constant dependent of $R$ and independent of time $s$,
then $\widetilde{x}(M,s)$ subconverges smoothly to the limiting self-expander.
\end{cor}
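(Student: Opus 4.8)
The plan is to obtain Corollary~\ref{asymptotically} from Theorem~\ref{local_asymptotically} by centering the balls at the origin and letting the radius grow, and then gluing the resulting local limits by a diagonal argument. The only ingredient needed beyond Theorem~\ref{local_asymptotically} is the non-escape statement announced just before the corollary: there exist $R_0>0$ and a time $s_N$ such that $\widetilde x(M,s)\cap B(o,R_0)\neq\emptyset$ for all $s>s_N$. I take this as given; it is a consequence of analyzing $|\widetilde x|^2$ along the normalized drifting flow (\ref{normalized_tangent_mcf}) — using $\Delta_M\widetilde x=\vec{\widetilde{\mathbf{H}}}$, $|\nabla_M\widetilde x|^2=n$, and the uniform bound on the second fundamental form of the rescaled flow supplied by the Type~III hypothesis — and it does not require (\ref{control_condition2}).

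Given this, fix $R_0$ and $s_N$ as above and fix an integer $j\ge R_0$. I would apply Theorem~\ref{local_asymptotically} with $q=o$ and $R=j$: the nonemptiness hypothesis holds because $B(o,R_0)\subset B(o,j)$ forces $\widetilde x(M,s)\cap B(o,j)\neq\emptyset$ for $s>s_N$; the control hypothesis (\ref{control_condition}) is precisely (\ref{control_condition2}) with $R=j$ and constant $C_j$; and the Type~III and integrability assumptions are exactly those of Theorem~\ref{local_asymptotically}. Hence there is a sequence $s^{(j)}_i\to\infty$ along which $\widetilde x(M,s^{(j)}_i)\cap B(o,j)$ converges smoothly to a self-expander $\Sigma_j$ in $B(o,j)$. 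Next I would run a diagonal argument: starting from $s^{(R_0)}_i$, for each successive $j$ extract a subsequence of the previous one along which, by Theorem~\ref{local_asymptotically} with radius $j$, smooth convergence in $B(o,j)$ also holds, and let $s_i$ be the diagonal sequence. Then $\widetilde x(M,s_i)\cap B(o,j)$ converges smoothly for every $j$, and by uniqueness of smooth limits the limit in $B(o,j)$ restricts on $B(o,k)$ to the limit in $B(o,k)$ for $k<j$. Therefore the $\Sigma_j$ patch together to a single hypersurface $\Sigma\subset\mathbb{R}^{n+1}$ which, being a smooth limit in each $B(o,j)$, is closed in $\mathbb{R}^{n+1}$ — hence properly embedded and complete — and which satisfies the self-expander equation (\ref{expanding_soliton}) everywhere. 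Thus $\widetilde x(M,s_i)$ converges smoothly on compact subsets of $\mathbb{R}^{n+1}$ to the self-expander $\Sigma$, which is the claimed subconvergence.

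Granting Theorem~\ref{local_asymptotically} and the non-escape statement, this argument is routine bookkeeping; the real work lies in those two inputs — the non-escape of $\widetilde x(M,s)$ from a fixed ball, and the local smooth compactness in Theorem~\ref{local_asymptotically}, which is where the control (\ref{control_condition2}) and the $L^2$ decay (\ref{eq_b}) genuinely enter. If one only assumes the weaker hypothesis (\ref{control_condition_2}) in place of (\ref{control_condition2}), the same proof goes through verbatim using the $f_0$-version of Theorem~\ref{local_asymptotically}, as noted in Remark~\ref{rmk}.
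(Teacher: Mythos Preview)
Your approach is essentially the paper's: use the non-escape statement to guarantee $\widetilde x(M,s)\cap B(o,R)\neq\emptyset$ for all large $R$ and $s$, then invoke Theorem~\ref{local_asymptotically} on each such ball and pass to a diagonal subsequence. The one substantive difference is that the paper's proof of the corollary actually supplies the non-escape statement you take as given, and by a simpler route than you sketch: it works with the non-drifting normalized flow $\overline{x}$ (whose image coincides with that of $\widetilde{x}$), computes $\partial_s|\overline{x}|^2 = 2\langle\vec{\overline{\mathbf H}},\overline{x}\rangle - 2|\overline{x}|^2$, uses the Type~III bound $|\vec{\overline{\mathbf H}}|\le C(n)$, and integrates the resulting scalar differential inequality to get $|\overline{x}|(p,s)\le e^{-s}|x_0(p)| + C(n)(1-e^{-s})$; no Laplacian identity or maximum principle is needed. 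Your explicit diagonal argument is more careful than the paper's terse final sentence, but to the same effect. One small overclaim: the limit is a priori only an \emph{immersed} hypersurface (as in the statement and proof of Theorem~\ref{local_asymptotically}), not necessarily embedded.
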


Next we give an application of Theorem \ref{local_asymptotically} in which  given conditions depend only on the initial time. It turns out that they imply  condition \eqref{control_condition2}.

\begin{thm}\label{asymptotically_app2}
Let $x( \cdot,t):M\to \mathbb{R}^{n+1}$ be Type III solution to the mean curvature flow (\ref{mcf})  with initial data $x_{0}:M\to \mathbb{R}^{n+1}$ being a complete immersed hypersurface satisfying $\int_M e^{-\frac{1}{2}|x_{0}|^2}d\mu_{0}<\infty$ and
\begin{equation}\label{asymptotically_app2_condition}
\mu H\geq -\langle x_0-q_0 ,\nu\rangle\geq  0
\end{equation}
 for some positive constant $\mu$ and a fixed vector $q_0$ at the initial time.
Then its corresponding normalized drifting mean curvature flow (\ref{normalized_tangent_mcf}) subconverges smoothly to the limiting  self-expander.
\end{thm}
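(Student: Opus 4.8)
The plan is to verify hypothesis \eqref{control_condition2} of Corollary \ref{asymptotically} — that for every $R>0$ there is a constant $C_R$ with $|x_0(p)|\le C_R$ whenever $\widetilde x(p,s)\in B(o,R)$ — and then invoke that corollary; the remaining assumption $\int_M e^{-\frac12|x_0|^2}d\mu_0<\infty$ is precisely what is needed to apply it, and the Type III condition guarantees that $x(\cdot,t)$, hence the normalized drifting flow $\widetilde x(\cdot,s)$, exists for all time. I describe the argument when the fixed vector $q_0$ in \eqref{asymptotically_app2_condition} equals the origin $o$, and indicate the routine changes for general $q_0$ at the end.

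\emph{Propagating the two-sided bound.} Using the standard evolution equations $(\partial_t-\Delta)H=|A|^2H$ and $(\partial_t-\Delta)\langle x,\nu\rangle=-2H+|A|^2\langle x,\nu\rangle$ under \eqref{mcf}, one checks that
\[
w:=(\mu+2t)H+\langle x,\nu\rangle\qquad\text{satisfies}\qquad(\partial_t-\Delta)w=|A|^2w,
\]
the time-dependent coefficient $\mu+2t$ being chosen exactly so as to absorb the inhomogeneous term $-2H$. Since \eqref{asymptotically_app2_condition} at $t=0$ gives $w\ge0$, $H\ge0$ and $-\langle x_0,\nu\rangle\ge0$, the maximum principle — applicable here by the Type III curvature bound together with the noncompact maximum principle technique of Ecker and Huisken, \cite{EH} — yields, for all $t\ge0$,
\[
H\ge0,\qquad 0\le -\langle x,\nu\rangle\le(\mu+2t)H.
\]

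\emph{Rescaling and the near-monotonicity of $|\widetilde x|^2$.} Rescaling by \eqref{scaling} (so that $\widetilde H=\sqrt{2t+1}\,H$, $\widetilde\nu=\nu$, $\widetilde x=x/\sqrt{2t+1}$ and $2t+1=e^{2s}$), these inequalities become, along \eqref{normalized_tangent_mcf},
\[
\widetilde H\ge0,\qquad 0\le\widetilde u:=-\langle\widetilde x,\widetilde\nu\rangle\le\kappa(s)\widetilde H,\qquad\kappa(s):=1+(\mu-1)e^{-2s}\le\max\{1,\mu\}.
\]
From \eqref{normalized_tangent_mcf} we have $\partial_s\widetilde x=-(\widetilde H+\langle\widetilde x,\widetilde\nu\rangle)\widetilde\nu$, hence $\partial_s|\widetilde x|^2=2\widetilde u(\widetilde H-\widetilde u)$. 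The bound $0\le\widetilde u\le\kappa(s)\widetilde H$ gives $\widetilde u(\widetilde H-\widetilde u)\ge-(\mu-1)_+e^{-2s}\widetilde u^2\ge-(\mu-1)_+e^{-2s}|\widetilde x|^2$, where $(\mu-1)_+:=\max\{\mu-1,0\}$, so $\partial_s\log|\widetilde x|^2\ge-2(\mu-1)_+e^{-2s}$ away from the zero set of $\widetilde x$. Integrating over $[0,s]$ (replacing $|\widetilde x|^2$ by $|\widetilde x|^2+\varepsilon$ and letting $\varepsilon\to0$ to handle the zeros) gives
\[
|\widetilde x(p,s)|^2\ge e^{-(\mu-1)_+}\,|\widetilde x(p,0)|^2=e^{-(\mu-1)_+}\,|x_0(p)|^2.
\]
Thus $\widetilde x(p,s)\in B(o,R)$ forces $|x_0(p)|^2\le e^{(\mu-1)_+}R^2=:C_R^2$, which is \eqref{control_condition2}, and Corollary \ref{asymptotically} then gives smooth subconvergence of $\widetilde x(M,s)$ to a self-expander. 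For general $q_0$ one runs the same computation with $z:=\widetilde x-q_0e^{-s}$: the first step holds with $\langle x,\nu\rangle$ replaced by $\langle x-q_0,\nu\rangle$, so $0\le\widetilde u_{q_0}:=-\langle z,\widetilde\nu\rangle\le\kappa(s)\widetilde H$ and
\[
\partial_s|z|^2=2\widetilde u_{q_0}(\widetilde H-\widetilde u_{q_0})+2e^{-s}\langle z,q_0^{T}\rangle\ge-2(\mu-1)_+e^{-2s}|z|^2-2e^{-s}|q_0|\,|z|;
\]
an elementary ODE comparison then yields $|z(p,0)|\le C(\mu)\big(|z(p,s)|+|q_0|\big)$, and together with $|x_0(p)|\le|z(p,0)|+|q_0|$ and $|z(p,s)|\le|\widetilde x(p,s)|+|q_0|$ this again gives \eqref{control_condition2}.

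\emph{Main obstacle.} The one genuinely delicate point is the maximum principle step: one must propagate $w\ge0$, $H\ge0$ and $-\langle x,\nu\rangle\ge0$ on the complete, a priori noncompact, possibly unbounded-geometry initial hypersurface. The Type III bound $t|A|^2\le C_0$ supplies the interior curvature control for $t>0$, after which one follows the noncompact maximum principle arguments of \cite{EH}; granting this, the remainder of the proof is the short computation above.
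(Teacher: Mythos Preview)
Your argument is correct and follows the same overall strategy as the paper: verify condition \eqref{control_condition2} of Corollary~\ref{asymptotically} by showing that $|\widetilde x|^2$ is (essentially) nondecreasing along the normalized drifting flow, after propagating the initial sign conditions via the noncompact maximum principle.

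The one genuine difference is in how the constant $\mu$ and the point $q_0$ are absorbed. The paper first performs the parabolic rescaling \eqref{rescaling_constant}, reducing to $\mu=1$, $q_0=0$; it then invokes Lemma~\ref{lemma_4.1}, whose proof uses the clean evolution $(\partial_s-\overline\Delta)(\overline H+\langle\overline x,\overline\nu\rangle)=(|\overline A|^2-1)(\overline H+\langle\overline x,\overline\nu\rangle)$ under the \emph{normalized} flow, and obtains the exact monotonicity $\partial_s|\widetilde x|^2\ge0$. You instead keep $\mu$ general and discover the time-dependent supersolution $w=(\mu+2t)H+\langle x-q_0,\nu\rangle$ satisfying $(\partial_t-\Delta)w=|A|^2w$ under the \emph{unnormalized} flow; after rescaling this gives $\widetilde u\le\kappa(s)\widetilde H$ with $\kappa(s)=1+(\mu-1)e^{-2s}\to1$, hence only near-monotonicity of $|\widetilde x|^2$ with an integrable defect, which you then handle by an ODE comparison. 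Your route is a little longer computationally but sidesteps the preliminary rescaling (and with it the small check that the hypothesis $\int_M e^{-\frac12|x_0|^2}d\mu_0<\infty$ transfers to the rescaled initial surface $\mu^{-1/2}(x_0-q_0)$, which is not entirely automatic when $\mu>1$); the paper's route yields a cleaner pointwise inequality once that reduction is accepted. The maximum-principle step you flag as the ``main obstacle'' is handled in the paper exactly as you suggest, via Theorem~\ref{thm-EH} and Remark~\ref{rem-vol}.
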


\begin{rem}
\begin{enumerate}

\item
The two-sheeted hyperboloid of revolution  is an example satisfying Theorem \ref{asymptotically_app2}. We write the two-sheeted hyperboloid of revolution as $x_0(u,v)=(a\sqrt{u^2-1}\cos{v},a\sqrt{u^2-1}\sin{v},cu)$, $a>0$, $c>0$ and $|u|\geq 1$. Then the unit normal vector field 
\[\nu=\partial_vx_0\times \partial_ux_0=\frac{(c\sqrt{u^2-1}\cos{v},c\sqrt{u^2-1}\sin{v},-au)}{\sqrt{(a^2+c^2)u^2-c^2}},\]
the mean curvature 
\[H=\frac{c[c^2(u^2-1)+a^2(u^2+1)]}{a[(a^2+c^2)u^2-c^2]^{\frac{3}{2}}},\]
and 
\[<x_0,\nu>=-\frac{ac}{\sqrt{(a^2+c^2)u^2-c^2}}.\]
It easy see that the condition (\ref{asymptotically_app2_condition}) is satisfied for $p=0$ and $\mu$ large enough. Since each component of the 
two-sheeted hyperboloid of revolution $x_0(u,v)$ is the entire graph satisfying the linear growth condition (\ref{linear_growth}), the mean curvature flow on $x_0(u,v)$ must be Type III by the result of Ecker and Huisken. Then by Theorem \ref{asymptotically_app2}, the normalized drifting mean curvature flow (\ref{normalized_tangent_mcf}) of the two-sheeted hyperboloid of revolution subconverges smoothly to the limiting  self-expander.
Note that
$|\langle x_0,\nu\rangle|\leq C$, implying condition (\ref{growth_condition}), and hence the convergence of $M_s$ towards the limiting self-expander  was also confirmed by Ecker and Huisken  in \cite{EH}.

\item
If we assume that the Type III mean curvature flow only has  positive mean curvature, one may not have the convergence  towards a self-expander (see Example \ref{counterexample2}).
\end{enumerate}
\end{rem}

We also give another proof of Theorem \ref{asymptotically_app2}. This proof is based on following observation: Let $x( \cdot,t):M\to \mathbb{R}^{n+1}$ be solution to the mean curvature flow (\ref{mcf})  satisfying $$\mu H\geq -\langle x_0-q_0 ,\nu\rangle\geq  0$$
for some positive constant $\mu$ and a fixed vector $q_0$ at the initial time. Rescale the flow \begin{equation}\label{rescaling_constant}
x_{\mu}(\cdot,t)=\mu^{-\frac{1}{2}}(x(\cdot, \mu t)-q_0 ).
\end{equation}
Let $\widetilde{x}_{\mu}(,s)$ be the corresponding normalized drifting mean curvature flow of $x_{\mu}(\cdot,t)$. Then $\int_{M}e^{-\frac{1}{2}|\widetilde{x}_{\mu}|^2}d\widetilde{\mu}_s$ is monotone nonincreasing.
On the other hand, we also observe that if the  solution to the mean curvature flow (\ref{mcf}) has the initial data $x_{0}$ satisfying
$-\langle x_0-q_0 ,\nu\rangle\geq \mu H\geq 0$, then $\int_{M}e^{-\frac{1}{2}|\widetilde{x}_{\mu}|^2}d\widetilde{\mu}_s$ is monotone nondecreasing.

\begin{thm}\label{asymptotically_app3}
Let $x( \cdot,t):M\to \mathbb{R}^{n+1}$ be the Type III solution to the mean curvature flow (\ref{mcf}) with initial data $x_{0}:M\to \mathbb{R}^{n+1}$ being a complete immersed hypersurface satisfying $\int_M e^{-\frac{1}{2}|x_{0}|^2}d\mu_{0}<\infty$ and
$$-\langle x_0-q_0 ,\nu\rangle\geq \mu H\geq 0$$ for some positive constant $\mu$ and a fixed vector $q_0$ at the initial time.
Let $\widetilde{x}(,s)$ be the corresponding normalized drifting mean curvature flow of $x_{\mu}(\cdot,t)$ defined in (\ref{rescaling_constant}). Then $\int_{M}e^{-\frac{1}{2}|\widetilde{x}_{\mu}|^2}d\widetilde{\mu}_s$ is monotone nondecreasing in $s$. If $$\lim\limits_{s\to +\infty}\int_{M}e^{-\frac{1}{2}|\widetilde{x}_{\mu}|^2}d\widetilde{\mu}_s<\infty,$$ then its corresponding normalized drifting mean curvature flow (\ref{normalized_tangent_mcf}) subconverges smoothly to the limiting  self-expander. In particular, if \[vol({\widetilde{M}_s\cap B(o,R)})\leq CR^m,\] 
for some $m>0$, then $\lim\limits_{s\to +\infty}\int_{M}e^{-\frac{1}{2}|\widetilde{x}_{\mu}|^2}d\widetilde{\mu}_s\leq C$ and the result above holds.
\end{thm}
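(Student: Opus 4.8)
The plan is to proceed in three stages: first establish the monotonicity of $\int_M e^{-\frac{1}{2}|\widetilde{x}_\mu|^2}d\widetilde{\mu}_s$ for the rescaled flow, then use finiteness of the limit together with the monotonicity formula of Theorem \ref{global_monotocity_normalized_tangent_mcf} to extract an $L^2$ decay in time, and finally upgrade this to smooth subconvergence exactly as in the proof of Theorem \ref{local_asymptotically}. For the first stage I would compute $\frac{\partial}{\partial s}(e^{-\frac12|\widetilde{x}_\mu|^2}d\widetilde{\mu}_s)$ directly under the normalized drifting flow \eqref{normalized_tangent_mcf} applied to $\widetilde{x}_\mu$. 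The key point is that scaling \eqref{rescaling_constant} turns the hypothesis $-\langle x_0-q_0,\nu\rangle\geq \mu H\geq 0$ at $t=0$ into $-\langle (x_\mu)_0,\nu\rangle \geq H_\mu \geq 0$, i.e.\ $\langle \vec{\mathbf{H}}_\mu - (x_\mu)_0^\perp, \nu\rangle$ has a favorable sign; and this sign condition is preserved along the flow by the maximum principle (Ecker--Huisken show the quantity $\langle \vec{\mathbf{H}} - x_s^\perp,\nu\rangle / V$, or an analogous scalar, satisfies a parabolic inequality). Granting preservation of the sign, the first-variation computation gives $\frac{d}{ds}\int_M e^{-\frac12|\widetilde{x}_\mu|^2}d\widetilde{\mu}_s = \int_M \langle \vec{\widetilde{\mathbf{H}}}_\mu - \widetilde{x}_\mu^\perp, \widetilde{x}_\mu\rangle e^{-\frac12|\widetilde{x}_\mu|^2}d\widetilde{\mu}_s$ up to lower-order terms that combine by the divergence theorem, and the sign condition makes this $\geq 0$, hence monotone nondecreasing. (The dual case $\mu H \geq -\langle x_0-q_0,\nu\rangle\geq 0$ in Theorem \ref{asymptotically_app2}'s alternate proof flips the sign and gives nonincreasing.)

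For the second stage, since the integral is monotone nondecreasing in $s$ and bounded above by assumption, it has a finite limit $L$, so $\int_s^{s'} \frac{d}{d\sigma}\int_M e^{-\frac12|\widetilde{x}_\mu|^2}d\widetilde{\mu}_\sigma\, d\sigma \to 0$ as $s,s'\to\infty$. Combined with the monotonicity formula for the \emph{weighted} quantity $e^{\frac12|\widetilde{x}|^2 - |x_0|^2}d\widetilde{\mu}_s$ from Theorem \ref{global_monotocity_normalized_tangent_mcf} (whose integrability is guaranteed by $\int_M e^{-\frac12|x_0|^2}d\mu_0 < \infty$), one obtains $\int_0^\infty \int_M |\vec{\widetilde{\mathbf{H}}} - \widetilde{x}^\perp|^2 e^{\frac12|\widetilde{x}|^2-|x_0|^2}d\widetilde{\mu}_s\, ds < \infty$, hence a sequence $s_i\to\infty$ along which $\int_M |\vec{\widetilde{\mathbf{H}}} - \widetilde{x}^\perp|^2 e^{\frac12|\widetilde{x}|^2 - |x_0|^2}d\widetilde{\mu}_{s_i} \to 0$. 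The third stage then invokes the mechanism of Theorem \ref{local_asymptotically}: on any fixed ball $B(o,R)$, the boundedness of the limit integral forces the relevant portion of $\widetilde{x}(M,s_i)$ to have bounded area and $|x_0|$ bounded on $N_{s_i}(o,R)$ (this is where the geometric growth hypothesis $\vol(\widetilde{M}_s \cap B(o,R)) \leq CR^m$ feeds in — it controls $\int_{N_s(o,R)} e^{-\frac12|x_0|^2}d\mu_0$ from below and gives the required area bound), so that combined with the Type III curvature bound $t|A|^2 \leq C$ — which under the rescaling becomes a \emph{uniform} bound $|\widetilde{A}|^2 \leq C$ for $s$ bounded away from $0$ — one gets uniform $C^\infty$ estimates by standard interior regularity (Ecker--Huisken estimates), and $|\vec{\widetilde{\mathbf{H}}}-\widetilde{x}^\perp|^2 \to 0$ in $L^2_{loc}$ upgrades to locally smooth convergence to a stationary point of \eqref{normalized_tangent_mcf}, i.e.\ a self-expander \eqref{expanding_soliton}.

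The main obstacle I anticipate is the first stage: verifying that the sign condition $-\langle x_\mu - q_0', \nu\rangle \geq H_\mu \geq 0$ (appropriately normalized) is genuinely preserved under the flow, and that the first-variation computation of $\frac{d}{ds}\int_M e^{-\frac12|\widetilde{x}_\mu|^2}d\widetilde{\mu}_s$ produces exactly $\int_M \langle \vec{\widetilde{\mathbf{H}}}_\mu - \widetilde{x}_\mu^\perp, \widetilde{x}_\mu\rangle\, (\cdots)$ with all the extra terms (the $\frac{n}{2t+1}$, $\frac{\langle x, \vec{\mathbf{H}}\rangle}{2t+1}$, $\frac{|x^T|^2}{(2t+1)^2}$ type corrections appearing in the non-pointwise computation in the introduction) canceling via the divergence theorem on the noncompact $M$ — which requires the decay afforded by the weight $e^{-\frac12|\widetilde{x}_\mu|^2}$ together with completeness and the area growth to justify the integration by parts with no boundary contribution at infinity. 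The secondary technical point is the passage from $L^2_{loc}$ smallness to smooth convergence, but that is handary once uniform $C^k$ bounds are in place.
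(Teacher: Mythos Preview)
Your Stage~1 worry is misplaced, and your Stage~2/3 plan has a genuine gap.

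For Stage~1: under the normalized \emph{drifting} flow \eqref{normalized_tangent_mcf} the density $e^{-\frac12|\widetilde{x}|^2}d\widetilde{\mu}_s$ evolves \emph{pointwise} with no boundary terms and no divergence theorem. Since $\frac{\partial}{\partial s}d\widetilde{\mu}_s=-\widetilde{H}(\widetilde{H}+\langle\widetilde{x},\widetilde{\nu}\rangle)d\widetilde{\mu}_s$ and $\frac{\partial}{\partial s}|\widetilde{x}|^2=-2\langle\widetilde{x},\widetilde{\nu}\rangle(\widetilde{H}+\langle\widetilde{x},\widetilde{\nu}\rangle)$, one gets directly
\[
\frac{\partial}{\partial s}\bigl(e^{-\frac12|\widetilde{x}|^2}d\widetilde{\mu}_s\bigr)
=\bigl(\langle\widetilde{x},\widetilde{\nu}\rangle^2-\widetilde{H}^2\bigr)e^{-\frac12|\widetilde{x}|^2}d\widetilde{\mu}_s,
\]
which is $\ge 0$ once Lemma~\ref{lemma_4.1} preserves $\widetilde{H}\ge 0$, $\langle\widetilde{x},\widetilde{\nu}\rangle\le 0$, $\widetilde{H}+\langle\widetilde{x},\widetilde{\nu}\rangle\le 0$. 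The ``lower-order terms that combine by the divergence theorem'' do not exist; this is precisely why the paper introduces the drifting flow.

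The real problem is your route through Theorem~\ref{global_monotocity_normalized_tangent_mcf} and Theorem~\ref{local_asymptotically}. That route requires the control condition \eqref{control_condition}, namely $|x_0(p)|\le C_R$ for all $p\in N_s(o,R)$. But here $\frac{\partial}{\partial s}|\widetilde{x}|^2=-2\langle\widetilde{x},\widetilde{\nu}\rangle(\widetilde{H}+\langle\widetilde{x},\widetilde{\nu}\rangle)\le 0$, so $|\widetilde{x}|$ is \emph{decreasing} in $s$: points can start arbitrarily far away and drift into $B(o,R)$, so $|x_0|$ need not be bounded on $N_s(o,R)$. (This is exactly opposite to Theorem~\ref{asymptotically_app2}, where $|\widetilde{x}|^2$ increases and \eqref{control_condition} follows for free.) The volume growth hypothesis $\vol(\widetilde{M}_s\cap B(o,R))\le CR^m$ is an area bound at time $s$; it says nothing about initial positions and does not give you \eqref{control_condition}. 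The paper instead stays with the quantity $\int_M e^{-\frac12|\widetilde{x}|^2}d\widetilde{\mu}_s$ throughout: its weight is bounded below by $e^{-R^2/2}$ on $B(o,R)$ automatically, so the assumed finite limit yields both the local area bound (for Langer compactness) and $\int_0^\infty\int_M(\widetilde{H}^2-\langle\widetilde{x},\widetilde{\nu}\rangle^2)e^{-\frac12|\widetilde{x}|^2}d\widetilde{\mu}_s\,ds<\infty$. On any subsequential limit one then has $\widetilde{H}_\infty^2=\langle\widetilde{x}_\infty,\widetilde{\nu}_\infty\rangle^2$, and the preserved signs force $\widetilde{H}_\infty+\langle\widetilde{x}_\infty,\widetilde{\nu}_\infty\rangle=0$. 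The volume growth is used only at the very end, to verify $\lim_{s\to\infty}\int_M e^{-\frac12|\widetilde{x}|^2}d\widetilde{\mu}_s<\infty$ via a dyadic shell estimate.
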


Recall that Ecker and Huisken  (\cite{EH}) proved that the normalized mean curvature flow of
entire graphs satisfying the linear growth condition
(\ref{linear_growth}) and the estimate
\begin{equation}\label{growth_condition_Ecker}
\langle x_0,\nu \rangle^2 \leq c(1+|x_0|^2)^{1-\delta}
\end{equation}
at time $t = 0$, where  $c<\infty$ and $\delta>0$,  converges to the self-expander.
There exists an example showing that
 the normalized mean curvature flow of
entire graphs satisfying only the linear growth condition
(\ref{linear_growth}) and failing to satisfy \eqref{growth_condition_Ecker} may not subconverge to a self-expander even if it has the positive mean curvature (see Example \ref{counterexample2}).
 As the application to Theorem  \ref{asymptotically_app3}, we have
\begin{cor}\label{cor_app}
Let $x_0$ be the
entire graph which has the nonnegative mean curvature and satisfies the linear growth condition
(\ref{linear_growth}). Moreover, assume there exists a fixed vector $q_0$ such that  $\langle x_0-q_0 ,\nu\rangle\leq C$, where  $C$ is a positive constant. Then
 normalized drifting mean curvature flow (\ref{normalized_tangent_mcf}) with initial data $x_0$ subconverges to the limiting self-expander.
\end{cor}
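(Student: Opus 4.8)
The plan is to deduce Corollary~\ref{cor_app} from Theorem~\ref{asymptotically_app3}, so the whole task is to check that the hypotheses of that theorem hold for the given entire graph $x_0$, possibly after replacing $q_0$ by a translate $q_0+Nw$. First I would dispose of the two ``soft'' hypotheses. Since $x_0$ is an entire graph obeying the linear growth condition (\ref{linear_growth}), its gradient is bounded, so Ecker--Huisken's Corollary 4.4 in \cite{EH} gives that the solution of (\ref{mcf}) with initial data $x_0$ is Type III. Writing $x_0$ as the graph of $f:\mathbb{R}^n\to\mathbb{R}$ with $|\nabla f|\le c$, we have $d\mu_0=\sqrt{1+|\nabla f|^2}\,dx'\le\sqrt{1+c^2}\,dx'$ and $|x_0|^2\ge|x'|^2$, whence $\int_M e^{-\frac12|x_0|^2}d\mu_0\le\sqrt{1+c^2}\int_{\mathbb{R}^n}e^{-\frac12|x'|^2}\,dx'<\infty$. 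The same computation gives the polynomial volume bound that will be needed at the end: since the drifting flow is the mean curvature flow up to tangential diffeomorphisms and (\ref{scaling}) is just a rescaling, $\widetilde x_\mu(M,s)$ remains an entire graph with gradient bound $\le c$, so $\vol(\widetilde M_s\cap B(o,R))\le C R^n$.

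The real point is to arrange the pointwise inequality $-\langle x_0-q_1,\nu\rangle\ge\mu H\ge 0$ required by Theorem~\ref{asymptotically_app3}. From (\ref{linear_growth}) we have $\langle\nu,w\rangle\ge 1/c$ uniformly on $M$, so setting $q_1:=q_0+Nw$ gives
$-\langle x_0-q_1,\nu\rangle=-\langle x_0-q_0,\nu\rangle+N\langle\nu,w\rangle\ge N/c-C$,
using the hypothesis $\langle x_0-q_0,\nu\rangle\le C$. Since $H\ge0$, if $H$ is bounded on $x_0$, say $H\le H_0$, then for $N>Cc$ and $\mu:=(N/c-C)/H_0>0$ we obtain $-\langle x_0-q_1,\nu\rangle\ge N/c-C\ge\mu H\ge0$, which is exactly hypothesis (iii) of Theorem~\ref{asymptotically_app3} with $q_1$ in place of $q_0$ (the Gaussian area hypothesis is unchanged, and Type III is unaffected by translation).

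The hard part will be the case in which $H$ is unbounded on the initial graph. Then I would first run the flow for an arbitrarily short time $t_1>0$: by the interior estimates of Ecker--Huisken for graphs, or simply the Type III bound $|A|^2\le K/t$, the hypersurface $x(\cdot,t_1)$ has bounded second fundamental form; moreover $H\ge0$ is preserved because $(\partial_t-\Delta)H=|A|^2H$, and the translated support inequality $\langle x-q_1,\nu\rangle\le 0$ is preserved because $(\partial_t-\Delta)\langle x-q_1,\nu\rangle=|A|^2\langle x-q_1,\nu\rangle-2H$, which is $\le 0$ at any would-be new positive maximum since $H\ge0$ there, the maximum principle on the noncompact graph being legitimate because $|\langle x-q_1,\nu\rangle|$ has at most linear growth in $|x|$ and the flow stays a graph with bounded gradient. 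Renaming $t_1$ as the initial time, translating once more by a large multiple of $w$ so that $-\langle x(\cdot,t_1)-q_2,\nu\rangle$ is bounded below by a positive constant, and using the curvature bound at time $t_1$, I recover $-\langle\,\cdot\,-q_2,\nu\rangle\ge\mu H\ge0$; the Gaussian area hypothesis for $x(\cdot,t_1)$ holds by the same graph estimate as above.

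With the hypotheses of Theorem~\ref{asymptotically_app3} verified, I would conclude: $\int_M e^{-\frac12|\widetilde x_\mu|^2}d\widetilde\mu_s$ is monotone nondecreasing, the polynomial volume bound makes its limit finite, and hence $\widetilde x_\mu(M,s)$, and therefore the normalized drifting mean curvature flow (\ref{normalized_tangent_mcf}) with initial data $x_0$, which differs from $\widetilde x_\mu(M,s)$ only by a fixed translation rescaled by a factor tending to $0$ and a scaling factor tending to $1$ as $t\to\infty$, subconverges smoothly to a self-expander. The single delicate step in the whole argument is the short-time restart in the unbounded-curvature case, i.e.\ verifying that $H\ge0$ and the translated support inequality genuinely survive the flow on the noncompact graph; everything else is bookkeeping around Theorem~\ref{asymptotically_app3}.
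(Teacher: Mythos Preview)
Your strategy coincides with the paper's: reduce to Theorem~\ref{asymptotically_app3} by shifting the base point along $w$ so that $\langle x_0-q_1,\nu\rangle\le 0$, and obtain the required polynomial volume bound from the preserved gradient estimate $\langle\widetilde\nu,w\rangle^{-1}\le c$. The paper's version is much terser---it only checks $\langle x_0-q_0-c_1w,\nu\rangle\le0$ and the volume bound and then invokes Theorem~\ref{asymptotically_app3} directly, without explicitly verifying the quantitative hypothesis $-\langle x_0-q_0',\nu\rangle\ge\mu H$ for some $\mu>0$ or the Gaussian-area finiteness. Your case split on whether $H$ is bounded, and the short-time restart in the unbounded case, therefore address a point the paper leaves implicit.

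One caution on the restart step: to propagate $H\ge0$ and $\langle x-q_1,\nu\rangle\le0$ over $[0,t_1]$ you invoke the maximum principle, but the zero-order coefficient $|A|^2$ in both evolution equations is, by hypothesis, unbounded as $t\to0^+$, so Theorem~\ref{thm-EH} (which requires $\sup|b|<\infty$) is not directly available there; your remark about linear growth of $\langle x-q_1,\nu\rangle$ addresses noncompactness, not this coefficient blow-up. One clean fix is to approximate $x_0$ by smooth graphs with bounded second fundamental form (preserving $H\ge0$ and the support inequality), apply the maximum principle to those, and pass to the limit via the stability of graphical mean curvature flow.
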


The structure of this paper is as follows. In section 2 we give proofs of Theorem \ref{monotocity_tangent_mcf}, Corollary \ref{monotocity_normalized_tangent_mcf}, Theorem \ref{global_monotocity_normalized_tangent_mcf} and Corollary \ref{asymptotically_longtime}. In
section 3 we give the proofs of Theorem \ref{local_asymptotically} and Theorem \ref{asymptotically}. In section \ref{sec-q_0 roofs} we give the proof of Theorem \ref{asymptotically_app2}, Theorem \ref{asymptotically_app3} and Corollary \ref{cor_app}.

\section{Monotonicity formulas}

Recall that the drifting mean curvature flow \eqref{tangent_mcf} is equivalent to \eqref{mcf} up to tangent diffeomorphisms defined by $\frac{x^T}{2t+1}$. Indeed, let $x$ solve $\frac{\partial}{\partial t} x = -H\nu$ and let $\phi_t = \phi(\cdot,t)$ be a family of diffeomorphisms on $M$ satisfying
\[2D_q \left(\frac{x}{t+\frac{1}{2}}(\phi(p,t),t\right)\left(\frac{\partial\phi}{\partial t}(p,t)\right) = \left(\frac{\partial}{\partial t}\left(\frac{x}{t+\frac{1}{2}}\right)(\phi(p,t),t)\right)^T,\]
implying
\[D_q x(\phi(p,t),t) \left(\frac{\partial \phi}{\partial t}(p,t)\right) = \frac{x(\phi(p,t),t)^T}{2t+1}.\]
Define $y(p,t) = x(\phi(p,t),t)$. Then $y(p,t)$ solves the drifting mean curvature flow equation,
\[\frac{\partial}{\partial t}y = \frac{\partial}{\partial t} x + D_q x(\phi(p,t),t) \left(\frac{\partial}{\partial t}\phi(p,t)\right) = -H\nu + \frac{y^T}{2t+1}\]
Similarly, one can easily see that reparametrizing drifting mean curvature flow \eqref{normalized_tangent_mcf} by diffeomorphisms leads to the normalized mean curvature flow \eqref{normalized_mcf}.

\begin{proof}[Proof of Theorem \ref{monotocity_tangent_mcf}]
Under the drifting mean curvature flow (\ref{tangent_mcf}), we have
\begin{align}\label{eq_1}
\frac{\partial}{\partial t}g_{ij} &=2\partial_i (\vec{\mathbf{H}}+\frac{x^T}{2t+1}) \partial_jx \nonumber\\
&=-2Hh_{ij}+\frac{1}{t+\frac{1}{2}}\partial_i(x-x^{\perp})\partial_j x\nonumber\\
&=-2Hh_{ij}+\frac{1}{t+\frac{1}{2}}g_{ij}+\frac{1}{t+\frac{1}{2}}x^{\perp}\partial_i\partial_j x \nonumber\\
&=-2Hh_{ij}+\frac{1}{t+\frac{1}{2}}g_{ij}-\frac{1}{t+\frac{1}{2}}\langle x,\nu\rangle h_{ij},
\end{align}
where we use $x^{\perp}=<x,\nu>\nu$ and $h_{ij}=-\nu\cdot \partial_i\partial_j x$.
It follows that
\begin{align}\label{eq_2}
\frac{\partial}{\partial t}d\mu_t &=(-|\vec{\mathbf{H}}|^2+\frac{n}{2t+1}+\frac{1}{2t+1}\langle x^{\perp},\vec{\mathbf{H}}\rangle)d\mu_t.
\end{align}
Recall that $\rho = (t + \frac 12)^{-n/2}\, e^{\frac{|x|^2}{4(t+\frac 12)}}$. By (\ref{eq_1}) and (\ref{eq_2}), we get that
\begin{align}
\frac{\partial}{\partial t} \rho d\mu_t&=(-\frac{n}{2t+1}-\frac{|x|^2}{(2t+1)^2}+\frac{\langle x,\frac{\partial}{\partial t}x\rangle}{2t+1})\rho d\mu_t+ \rho\frac{\partial }{\partial t} d\mu_t\nonumber\\
&=-|\vec{\mathbf{H}}-\frac{x^{\perp}}{2t+1}|^2 \rho d\mu_t\nonumber.
\end{align}
\end{proof}

\begin{proof}[Proof of Corollary \ref{monotocity_normalized_tangent_mcf}]
Using the scaling $\widetilde{x}(\cdot,s) = \frac{x(\cdot,t)}{\sqrt{2t+1}}$ along with $s = \frac 12\log(2t+1)$, and Theorem \ref{monotocity_tangent_mcf}, we get
\begin{align*}
\frac{\partial}{\partial s} \widetilde{\rho} d\widetilde{\mu}_s &= \frac{\partial}{\partial t} \left( e^{\frac{|x|^2}{4(t+\frac{1}{2})}} \frac{d\mu_t}{(2t+1)^{\frac n2}}\right) \frac{d t}{ds} \\
&= -(2t+1)\,|\vec{\mathbf{H}} - \frac{x^{\perp}}{2t+1}|^2\, \rho\, (d\mu_t 2^{-\frac n2}) \\
&= -|\vec{\widetilde{\mathbf{H}}} - \widetilde{x}^{\perp}|^2 \widetilde{\rho} d\widetilde{\mu}_s
\end{align*}
\end{proof}

Next we give the proof of Theorem \ref{global_monotocity_normalized_tangent_mcf}  whose immediate consequence is Corollary \ref{asymptotically_longtime}.

\begin{proof}[Proof of Theorem \ref{global_monotocity_normalized_tangent_mcf}] 
Since the weighted term $e^{-| x_{0}|^2}$ is independent of time,
we have
$$\frac{\partial}{\partial s} e^{\frac{1}{2}|\widetilde{x}|^2-| x_{0}|^2} d\widetilde{\mu}_s=-|\vec{\widetilde{\mathbf{H}}}-\widetilde{x}^{\perp}|^2 e^{\frac{1}{2}|\widetilde{x}|^2-| x_{0}|^2} d\widetilde{\mu}_s.$$
Integrate above over compact domain $\Omega$ in $M$,
we get
\begin{equation}\label{eq_d}
\frac{d}{d s}\int_{\Omega} e^{\frac{1}{2}|\widetilde{x}|^2-| x_{0}|^2} d\widetilde{\mu}_s=-\int_{\Omega}|\vec{\widetilde{\mathbf{H}}}-\widetilde{x}^{\perp}|^2 e^{\frac{1}{2}|\widetilde{x}|^2-| x_{0}|^2} d\widetilde{\mu}_s\leq 0.
\end{equation}
Then
\begin{equation}\label{eq_f}
\int_{\Omega} e^{\frac{1}{2}(|\widetilde{x}|^2-2| x_{0}|^2)}d\widetilde{\mu}_s\leq \int_{\Omega} e^{-\frac{1}{2}| x_{0}|^2}d \mu_0.
\end{equation}
Taking $\Omega\to M$, we conclude that (\ref{eq_a}) holds.
Integrate (\ref{eq_d}) over time interval $[0,s]$,
we get
\begin{equation}\label{eq_111}
\int_{\Omega} e^{-\frac{1}{2}| x_{0}|^2}d\mu_0-\int_{\Omega} e^{\frac{1}{2}|\widetilde{x}|^2-| x_{0}|^2} d\widetilde{\mu}_s=\int^s_{0}\int_{\Omega}|\vec{\widetilde{\mathbf{H}}}-\widetilde{x}^{\perp}|^2 e^{\frac{1}{2}|\widetilde{x}|^2-| x_{0}|^2} d\widetilde{\mu}_s.
\end{equation}
Then we have
\begin{equation}\label{eq_g}
\int^s_{0}\int_{\Omega}|\vec{\widetilde{\mathbf{H}}}-\widetilde{x}^{\perp}|^2 e^{\frac{1}{2}|\widetilde{x}|^2-| x_{0}|^2} d\widetilde{\mu}_s\leq \int_{\Omega} e^{-\frac{1}{2}| x_{0}|^2}d\mu_0\leq C_0.
\end{equation}
Taking $\Omega\to M$ in (\ref{eq_111}) and (\ref{eq_g}), we conclude that (\ref{eq_b}) and (\ref{global_monotocity_normalized_tangent_mcf_formula}) hold. Then Corollary \ref{asymptotically_longtime} follows from (\ref{eq_b}) and (\ref{global_monotocity_normalized_tangent_mcf_formula}) directly.
\end{proof}

\section{Convergence to an expander}

In this section we present the proofs of Theorem \ref{local_asymptotically} and Corollary \ref{asymptotically}. They give us sufficient conditions under which we have that the rescaled Type III  mean curvature flow converges to an expander.

\begin{proof}[Proof of Theorem \ref{local_asymptotically}]
 Let $N_s(q,R)=\widetilde{x}^{-1}(\widetilde{x}(M,s)\cap B(q,R))$.
 Using $|\widetilde{x}_0|^2 \le C_R^2$ on $N_s(q,R)$  and Corollary \ref{monotocity_normalized_tangent_mcf} we have
\begin{align*}
\mathcal{H}^n(\widetilde{x}(M,s)\cap B(q,R))&=\int_{M}\chi(N_s(q,R)) d\widetilde{\mu}_s\\
&\leq \int_{M}\chi(N_s(q,R))e^{C_R^2+\frac{1}{2}(|\widetilde{x}|^2-2| x_{0}|^2)} d\widetilde{\mu}_s\\
&\leq \int_{M}\chi(N_s(q,R))e^{C_R^2-\frac{1}{2}| x_{0}|^2} d \mu_0\\
&\leq e^{C_R^2}C_0,
\end{align*}
for $s>s_N$.

 Since the drifting mean curvature flow (\ref{tangent_mcf}) only differs from (\ref{mcf}) by the tangent diffeomorphisms, the drifting mean curvature flow (\ref{tangent_mcf}) is also  Type III.
By  rescaling (\ref{scaling}) we have $|\widetilde{A}(\cdot,s)|\leq C$ for $0< s<+\infty$, where $\widetilde{A}(\cdot,s)$ is the second fundamental form of immersion $\widetilde{x}(\cdot,s)$.  Moreover, we also have $|\widetilde{\nabla}^m \widetilde{A}(\cdot,s)|\leq C(m)$ by  Ecker and Huisken's derivative estimates for the mean curvature flow (see \cite{EH1}).
Moreover, $\widetilde{x}(M,s)\cap B(q,R)\neq \emptyset$ for $s>s_N$ by the assumption.  Therefore, by the result of Langer (\cite{J}) we conclude that
$\widetilde{x}(M,s_i)\cap B(q,R)$ (under reparametrization), subconverges smoothly along sequences $s_i\to \infty$ to a limiting immersion $\widetilde{x}_{\infty}$ in $B(q,R)$.   We have
\begin{align*}
& \int_M e^{\frac12(|\widetilde{x}|^2 - |\widetilde{x}_0 |^2)}\, d\widetilde{\mu}_t - \int_M e^{\frac12(|\widetilde{x}|^2 - |\widetilde{x}_0 |^2)}\, d\widetilde{\mu}_s \\
&= -\int_s^t\int_M e^{\frac12(|\widetilde{x}|^2 - |\widetilde{x}_0 |^2)}|\vec{\widetilde{\mathbf{H}}}-\widetilde{x}^{\perp}|^2 d\widetilde{\mu}_{\tau}\, d\tau.
\end{align*}
Since $\int_M e^{\frac12(|\widetilde{x}|^2 - |\widetilde{x}_0 |^2)}\, d\widetilde{\mu}_t$ is uniformly bounded and decreasing function in $t$, there exists a finite $\lim_{t\to\infty} \int_M e^{\frac12(|\widetilde{x}|^2 - |\widetilde{x}_0 |^2)}\, d\widetilde{\mu}_t$ implying that
\[\lim_{s\to\infty} \int_s^{\infty} \int_M e^{\frac12(|\widetilde{x}|^2 - |\widetilde{x}_0 |^2)}\, |\vec{\widetilde{\mathbf{H}}}-\widetilde{x}^{\perp}|^2\, d\widetilde{\mu}_{\tau}\, d\tau = 0.\]
Using that $|\widetilde{x}_0(p)| \le C_R$ in $N_s(q,R)$ for all $s$ sufficiently big we get
\begin{align}
\label{eq-expander1}
&0 = \lim_{s\to\infty} \int_s^{\infty}\int_M e^{\frac12(|\widetilde{x}|^2 - |\widetilde{x}_0 |^2)}\, |\vec{\widetilde{\mathbf{H}}}-\widetilde{x}^{\perp}|^2\, d\widetilde{\mu}_{\tau}\, d\tau \nonumber \\
&\ge e^{-C_R^2}\int_s^{\infty} \int_{N_s(q,R)} e^{\frac12\,|\widetilde{x}|^2}\, |\vec{\widetilde{\mathbf{H}}} - \widetilde{x}^{\perp}|^2\, d\widetilde{\mu}_{\tau}\, d\tau. \nonumber\\
&= e^{-C_R^2}\int_s^{\infty} \int_{\widetilde{x}(M,s)\cap B(q,R)} e^{\frac12\,|\widetilde{x}|^2}\, |\vec{\widetilde{\mathbf{H}}} - \widetilde{x}^{\perp}|^2\, d\widetilde{\mu}_{\tau}\, d\tau.
\end{align}
Recall that for every sequence $s_i\to\infty$, there exists a subsequence so that hypersurfaces $\widetilde{x}(M,s)\cap B(q,R)$ converge uniformly on compact sets to a limiting hypersurface in $B(q,R)$ which is defined by an immersion $\widetilde{x}_{\infty}$. Estimate \eqref{eq-expander1} implies $\widetilde{x}_{\infty}$ satisfies $\vec{\widetilde{\mathbf{H}}}_{\infty} = \widetilde{x}_{\infty}^{\perp}$ in $B(q,R)$.
\end{proof}

\begin{proof}[Proof of Corollary \ref{asymptotically}]
Let $x( \cdot,t):M\to \mathbb{R}^{n+1}$ be a Type III solution to the mean curvature flow (\ref{mcf}) with
$\sup\limits_{M\times
[0,\infty)}t|A|^2=C<\infty$ and let $\widetilde{x}(\cdot,s)$ be its corresponding normalized mean curvature flow.
By Theorem \ref{local_asymptotically}, we only need to prove there exists  $R_0$ such that $\widetilde{x}(M,s)\cap B(o,R_0)\neq \emptyset$ for $s$ sufficiently large.
Let $\overline{x}(\cdot ,s)$ be the solution to the normalized mean curvature flow
\begin{equation}\label{normalized_mcf1}
\frac{\partial \overline{x}}{\partial s}=\vec{\overline{\mathbf{H}}}-\overline{x},
\end{equation}
with the initial data $ x_{0}$. Then we have
\begin{equation}\label{eq_31}
\frac{\partial }{\partial s} |\overline{x}|^2=2\langle\vec{\overline{\mathbf{H}}},\overline{x}\rangle-2|\overline{x}|^2.
\end{equation}
Since the mean curvature flow  is  Type III and the normalized mean curvature flow (\ref{normalized_mcf1}) is obtained by
\begin{equation}
\overline{x}(\cdot,s)=\frac{1}{\sqrt{2t+1}}x(\cdot,t),
\end{equation}
where $s$ is given by $s=\frac{1}{2}\log(2t+1)$. Then $|\vec{\overline{\mathbf{H}}}|\leq C(n)$ for $[0,+\infty)$.
It follows from (\ref{eq_31}) that
$$|\overline{x}|(p,s)\leq e^{-s}| x_{0}|(p)+C(n) (1 - e^{-s}).$$
Hence $\overline{x}(M,s)\cap B(o,C(n)+1)\neq \emptyset$ for $s$  sufficiently large. Since $\bar{x}(M,s)\cap B(o,C(n)+1) \neq \emptyset$ and since the normalized drifting mean curvature flow (\ref{normalized_tangent_mcf}) differs from the normalized mean curvature flow (\ref{normalized_mcf}) only by the tangent diffeomorphisms,
(implying $\widetilde{x}(M,s)=\overline{x}(M,s)$), we have that $\widetilde{x}(M,s) \cap B(o,C(n)+1) \neq \emptyset$ for $s$  sufficiently large. By Theorem \ref{local_asymptotically}, we conclude $\widetilde{x}(M,s)\cap B(o,R)$ subconverges to the limiting self-expander in $B(o,R)$ for all $R\geq C(n)+1$.
\end{proof}

\begin{rem}\label{remark}
In \cite{EH}, Ecker and Huisken proved the following proposition showing that the normalized mean curvature flow (\ref{normalized_mcf}) on entire graphs satisfying the linear growth condition (\ref{linear_growth}) can not subconverge to a self-expander  if the condition (\ref{growth_condition}) fails.

\begin{prop}[\cite{EH}]\label{counterexample}
Let $\overline{x}:M\to\mathbb{R}^{n+1}$ be the entire graph solution to the normalized mean curvature flow
(\ref{normalized_mcf}) whose  initial data $x_{0}$ satisfies the linear growth condition (\ref{growth_condition}) and
$|\nabla^m A_0|\leq c(m)(1+|x|^2)^{-m-1}$ for $m=0,1$, where $A_0$ is the second fundamental form of $x_{0}$. Suppose there exists a sequence of points $p_k$ such that $|x_{0}(p_k)|\to \infty$
and $\langle x_{0}(p_k),\nu\rangle^2=\gamma |x_{0}(p_k)|^2$ for some $\gamma>0$. Then there exists a sequence of times $s_k\to\infty$ for which  $c_1\leq |x(p_k,s_k)|\leq c_2$ and $(H+\langle x,\nu \rangle)(p_k,s_k)$ has a uniform positive lower bound.
\end{prop}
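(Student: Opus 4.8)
The plan is to produce a non‑convergence statement by tracking the \emph{scale invariant expander defect} and showing it cannot be killed at the points $p_k$. Throughout, write $\overline x(\cdot,s)$ for the normalized flow (\ref{normalized_mcf}) (what the Proposition calls $x(\cdot,s)$, $H$, $\nu$), so that self‑expanders are exactly the solutions with $\overline v:=\overline H+\langle\overline x,\overline\nu\rangle\equiv 0$, and write $x(\cdot,t)$, $g_t$ for the associated mean curvature flow (\ref{mcf}), related by $\overline x=x/\sqrt{2t+1}$, $s=\tfrac12\log(2t+1)$. First I would record the estimates that survive when only the linear growth condition (\ref{linear_growth}) holds: by Ecker--Huisken the linear growth condition is preserved along (\ref{normalized_mcf}), so $\overline V\le c$ and $M_s=\overline x(M,s)$ remains an entire graph, and the flow is Type III, giving $|\overline A|\le C$, $|\overline\nabla^m\overline A|\le C(m)$, hence $|\overline H|\le C$ (equivalently $|A(\cdot,t)|^2\le C/(2t+1)$), which together with the hypothesis $|\nabla^m A_0|\le c(m)(1+|x|^2)^{-m-1}$ for $m=0,1$ gives uniformly controlled geometry, so the maximum/comparison principle and Langer's compactness result \cite{J} are available. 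From $\partial_s|\overline x|^2=2\langle\vec{\overline{\mathbf{H}}},\overline x\rangle-2|\overline x|^2$ and $|\vec{\overline{\mathbf{H}}}|\le C$ one gets $e^{-s}|x_0(p)|-C\le|\overline x(p,s)|\le e^{-s}|x_0(p)|+C$, so since $|x_0(p_k)|\to\infty$ there is, for large $k$, a first time $s_k$ with $|\overline x(p_k,s_k)|=c_2$ (a large constant fixed below), and $s_k\to\infty$ with $e^{s_k}$ comparable to $|x_0(p_k)|/c_2$, i.e. the unnormalized time satisfies $t_k$ comparable to $|x_0(p_k)|^2/c_2^2$. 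This already gives the first conclusion with $c_1=c_2$.

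The core is a lower bound for $\overline v(p_k,s_k)$. The key observation is that, under (\ref{mcf}), the \emph{unnormalized} defect $w:=(2t+1)H+\langle x,\nu\rangle$ satisfies the clean homogeneous equation
\[\left(\frac{\partial}{\partial t}-\Delta_{g_t}\right)w=|A|^2\,w,\]
obtained by combining $(\partial_t-\Delta)H=|A|^2H$ with $(\partial_t-\Delta)\langle x,\nu\rangle=|A|^2\langle x,\nu\rangle-2H$ and $\partial_t[(2t+1)H]=2H+(2t+1)(\Delta H+|A|^2H)$; under the rescaling this becomes $(\partial_s-\overline\Delta)\overline v=(|\overline A|^2-1)\overline v$ with $\overline v=e^{-s}w=\overline H+\langle\overline x,\overline\nu\rangle$. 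Passing to a subsequence I may assume all $\langle x_0(p_k),\nu_0\rangle$ have the same sign, and after replacing $w$ by $-w$ if needed that this sign is positive, so $w(p_k,0)=H_0(p_k)+\langle x_0(p_k),\nu_0\rangle\ge\sqrt\gamma\,|x_0(p_k)|-C$. Since $|\nabla\langle x,\nu\rangle|=|A(x^T,\cdot)|\le|A|\,|x|\le c$ at $t=0$ on the region where $|x|$ is comparable to $|x_0(p_k)|$, the function $w(\cdot,0)$ is at least $\tfrac14\sqrt\gamma\,|x_0(p_k)|$ on the intrinsic ball $B_\rho(p_k)$ with $\rho=\epsilon|x_0(p_k)|$, for $\epsilon$ small and $k$ large.

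On the parabolic cylinder $B_\rho(p_k)\times[0,t_k]$ the function $w$ stays nonnegative — this uses the bounded geometry and the decay $|A|^2\le C/(2t+1)$ to control the metric and the region over the (long) interval $t_k$, together with the Lipschitz bound and a global linear lower bound on $\langle x,\nu\rangle$ coming from the preserved linear growth — so there $w$ is a supersolution of the heat equation. Comparing with the Dirichlet solution with the same initial data and using $t_k\le\rho^2/C_0$ — which is where $c_2$, hence $\epsilon$, is taken large enough that the heat–spreading scale $\sqrt{t_k}$ stays well inside $B_\rho(p_k)$ — gives $w(p_k,t_k)\ge c\,\sqrt\gamma\,|x_0(p_k)|$, whence
\[\overline v(p_k,s_k)=e^{-s_k}w(p_k,t_k)\ge c\,\sqrt\gamma\,\frac{|x_0(p_k)|}{e^{s_k}}\ge c_0>0\]
uniformly in $k$. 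If some subsequence of $M_s$ converged smoothly, near $o$, to a self‑expander (the limiting region would contain the points $\overline x(p_k,s_k)\in\partial B(o,c_2)$), then $\overline v$ would tend uniformly to $0$ there, contradicting this bound; hence the normalized flow cannot subconverge to a self‑expander, which is the second conclusion. The main obstacle is exactly this last local supersolution comparison: controlling the lateral boundary of the moving cylinder $B_\rho(p_k)\times[0,t_k]$ over the long time $t_k$ comparable to $|x_0(p_k)|^2$, and balancing the two free constants $c_2$ and $\epsilon$; the point that makes it go through is the use of the clean quantity $w=(2t+1)H+\langle x,\nu\rangle$, which, unlike $\langle x,\nu\rangle$ alone, carries no inhomogeneous lower‑order term.
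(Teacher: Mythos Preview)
The paper does not actually prove this proposition; it is quoted from Ecker--Huisken \cite{EH} without argument, so there is no proof in the paper to compare your sketch against and your proposal has to be judged on its own.

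The skeleton is sound --- the identity $(\partial_t-\Delta_{g_t})w=|A|^2w$ for $w=(2t+1)H+\langle x,\nu\rangle$ is correct, and the ODE for $|\overline x|^2$ does locate $s_k\to\infty$ with $|\overline x(p_k,s_k)|=c_2$ and $e^{s_k}\sim|x_0(p_k)|/c_2$ --- but the step you yourself flag as ``the main obstacle'' is a genuine gap, not a detail. You assert that $w\ge0$ on the moving cylinder $B_\rho(p_k)\times[0,t_k]$ and hence that $w$ is a heat supersolution there. For the intended examples (Example~\ref{counterexample_1}) $\langle x_0,\nu\rangle$, and therefore $w(\cdot,0)$, oscillates in sign along the whole graph, so no global maximum principle gives $w\ge0$, and your stated reason --- ``a global linear lower bound on $\langle x,\nu\rangle$ from the preserved linear growth'' --- yields only the two-sided bound $|\langle x,\nu\rangle|\le C|x|$, hence $|w|\le C|x_0(p_k)|$ on the cylinder, not a sign. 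Without $w\ge0$ the supersolution property fails and the Dirichlet comparison is invalid: negative lateral boundary values of order $|x_0(p_k)|$ can compete with the interior term. Nor is this easily patched by a crude Gaussian bound on the boundary contribution, because the zeroth-order coefficient integrates to $\int_0^{t_k}\sup|A|^2\,dt\le\tfrac{C}{2}\log(2t_k+1)\sim C\log|x_0(p_k)|$, amplifying the kernel by a factor $|x_0(p_k)|^{C}$ that overwhelms the fixed Gaussian suppression $e^{-c(c_2\epsilon)^2}$ as $k\to\infty$. What would rescue the argument is a \emph{local} curvature bound of the form $|A|^2\le C/|x_0(p_k)|^2$ on the cylinder, obtainable in principle from the strong initial decay $|A_0|\le c(1+|x_0|^2)^{-1}$ via a rescaling and Ecker--Huisken interior estimates; with that in hand the amplification factor becomes $O(1)$ and the two-sided bound plus boundary Gaussian estimate closes. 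But this local curvature control is a separate, nontrivial ingredient you have not supplied, so as written the key inequality $w(p_k,t_k)\ge c\sqrt\gamma\,|x_0(p_k)|$ is asserted rather than proved.
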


They also gave the following explicit example which satisfies the conditions of Proposition \ref{counterexample}.
\begin{exam}\label{counterexample_1}
The graph 
\begin{equation}
u_0(\hat{x})=u_0(|\hat{x}|)=\left\{
                              \begin{array}{ll}
                                |\hat{x}|\sin\log |\hat{x}|, & \hbox{$|\hat{x}|\geq  1$;} \\
                                smooth, & \hbox{$|\hat{x}|\leq 1$,}
                              \end{array}
                            \right.
\end{equation}
where $\hat{x}$ is the coordinate on $\mathbb{R}^2$ satisfies conditions of Proposition \ref{counterexample}.
\end{exam}

It follows from Proposition \ref{counterexample} that $\overline{x}(M,s)\cap B(o,c_2)$ can not converge to the self-expander in the case of Example \ref{counterexample_1}, where $c_2$ is the same as in Proposition \ref{counterexample}.
 Since the normalized drifting mean curvature flow (\ref{normalized_tangent_mcf}) only differs from normalized mean curvature flow (\ref{normalized_mcf}) by tangent diffeomorphisms,  it follows that $\widetilde{x}(M,s)\cap B(o,c_2)$ can not converge to the self-expander in the case of Example \ref{counterexample_1}, where $\widetilde{x}(\cdot,s)$ is
 the corresponding solution to the normalized drifting flow (\ref{normalized_tangent_mcf}).
By Theorem \ref{local_asymptotically} and Remark \ref{rmk}, we know that the conditions (\ref{control_condition}) and (\ref{control_condition_2}) must fail in this case.
\end{rem}

Next we give an example which shows that one may not have that the asymptotic limit of Type III mean curvature is the self-expander even if the initial data $x_0$ is an entire graph satisfying the linear growth condition (\ref{linear_growth}) and having  the positive mean curvature.  

\begin{exam}\label{counterexample2}
Let $x_0(r,\theta)=(r\cos{\theta},r\sin{\theta},f(r))$ be a surface of revolution.
We calculate that
the first fundamental form
$$
g=(1+f'(r)^2)dr^2+r^2d\theta^2,
$$
the second fundamental form
 $$
 h=\frac{f''(r)}{\sqrt{1+f'(r)^2}}dr^2+\frac{rf'(r)}{\sqrt{1+f'(r)^2}}d\theta^2,
 $$
  mean curvature $H=\frac{f'(r)(1+f'(r)^2)+rf''(r)}{r(1+f'(r)^2)^{\frac{3}{2}}}$, $\nu=\frac{(f'(r)\sin{\theta},f'(r)\cos{\theta},-1)}{\sqrt{1+f'(r)^2}}$,
 and
   $$<x_0,\nu>=\frac{f'(r)r-f(r)}{\sqrt{1+f'(r)^2}}.$$
   Here we choose
\begin{equation}
f(r)=\left\{
                              \begin{array}{ll}
                                r\sin\log r+6r, & \hbox{$r\geq  1$;} \\
                                \text{smooth and $f'(r)\geq 0$, $f''(r)\geq 0$}, & \hbox{$r \leq 1$,}
                              \end{array}
                            \right.
\end{equation}
Such $f(r)$ exists since $f'(1)=7$ and $f''(1)=1$. It is easy to check that $f(r)$ satisfies $|f'(r)|\leq C$, $|rf''(r)|\leq C$, $|r^2f'''(r)|\leq C$
and $|f'(r_k)r_k-f(r_k)|=\gamma r_k$ for some sequence $r_k\to +\infty$ and a constant $\gamma>0$. Then it easily follows  the surface $x_0$ satisfies the conditions of Proposition \ref{counterexample} and that $H > 0$.
\end{exam}

\section{More on the convergence to an expander}
\label{sec-q_0 roofs}
In this section we give the proofs of  Theorem \ref{asymptotically_app2}, Theorem \ref{asymptotically_app3} and Corollary \ref{cor_app} where we impose conditions on the initial data and then show that condition \eqref{control_condition} is satisfied, so that the conclusion of Theorem \eqref{local_asymptotically} still holds.

We will need to apply the maximum principle for complete noncompact one parameter family of hypersurfaces that has been proved for example in \cite{EH1}. We  state this maximum principle result below for the convenience of a reader.

\begin{thm}[Maximum principle for complete manifolds in \cite{EH1}]
\label{thm-EH}
Suppose that the manifold $M^n$ with Riemannian metrics $g(\cdot,t)$ satisfies a uniform volume growth restriction, namely
\begin{equation}
\label{eq-vol}
vol_t(B_{g(t)}(p,r)) \le e^{k(1+r^2)},
\end{equation}
holds for some point $p\in M^n$ and a uniform constant $k > 0$ for all $t \in [0,T]$, where $B_{g(t)}(p,r)$ is the intrinsic ball on $M^n$. Let $f$ be a function on $M^n\times [0,T]$ which is smooth on $M^n\times (0,T]$ and continuous on $M^n\times[0,T]$. Assume that $f$ and $g(t)$ satisfy
\begin{enumerate}
\item[(i)]
$\frac{\partial}{\partial t} f \le \Delta_t f + {\bf a} \cdot \nabla f + b f$ where the function $b$ satisfies $\sup_{M^n\times [0,T]} |b| \le \alpha_0$ for some $\alpha_0 < \infty$ and the vector {\bf $a$} satisfies $\sup_{M^n\times[0,T]} |{\bf a}| \le \alpha_1$ for some $\alpha_1 < \infty$,
\item[(ii)]
$f(p,0) \le 0$ for all $p\in M^n$,
\item[(iii)]
$\int_0^T \int_M e^{-\alpha_2^2 \dist_t(p,y)^2}\, |\nabla f|^2(y,t)\, d\mu_t\, dt < \infty, \qquad \mbox{for some} \qquad \alpha_1 < \infty$,
\item[(iv)]
$\sup_{M^n\times[0,T]} \left|\frac{\partial}{\partial t} g_{ij}\right| \le \alpha_3, \qquad \mbox{for some} \qquad \alpha_3< \infty$.
\end{enumerate}
Then we have $f \le 0$ on $M^n\times [0,T]$.
\end{thm}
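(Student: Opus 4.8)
Below I sketch how I would prove Theorem~\ref{thm-EH} (this is the Ecker--Huisken maximum principle on complete manifolds; I record a proof proposal rather than citing it).

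\medskip
\noindent\textbf{Overall plan.} The plan is to run a weighted energy (Caccioppoli-type) estimate against a fixed Gaussian weight, localize to compact domains with cutoffs, and then close the argument by Gr\"onwall's inequality, exploiting that the initial datum vanishes. First I would pass to the nonnegative part $u:=\max(f,0)$. Since $f$ is continuous on $M^n\times[0,T]$ and smooth on $M^n\times(0,T]$, the function $u$ is Lipschitz in space and in time, it satisfies $\partial_t u\le\Delta_t u+\mathbf{a}\cdot\nabla u+bu$ on the open set $\{u>0\}$, and on $\{u=0\}$ one has $\nabla u=0$ and $\partial_t u=0$ for a.e.\ point, so the same differential inequality holds in the a.e.\ sense on all of $M^n\times(0,T]$; by (ii), $u(\cdot,0)\equiv 0$. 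Fix the point $p$ from \eqref{eq-vol}, put $d(y):=\dist_0(p,y)$, and note that (iv) makes the metrics $g(t)$, $t\in[0,T]$, uniformly equivalent and yields $|\partial_t\,d\mu_t|\le C_0\,d\mu_t$ with $C_0=C_0(n,\alpha_3)$; so gradients and volumes with respect to $g(t)$ are comparable to those for $g(0)$, uniformly in $t$. Choose $\beta>0$ larger than the constant $k$ in \eqref{eq-vol} and larger than $\alpha_2^2$ from (iii) (adjusted for the equivalence of metrics), and set the time-independent weight $\eta:=e^{-\beta d^2}$, so that $\int_M\eta\,d\mu_t<\infty$ for every $t$ by \eqref{eq-vol}.

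\medskip
\noindent\textbf{The localized energy estimate.} Next I would take cutoffs $\phi_r$ with $\phi_r\equiv 1$ on $\{d\le r\}$, $\phi_r\equiv 0$ on $\{d\ge 2r\}$, $|\nabla\phi_r|\le 2/r$, and consider $E_r(t):=\int_M u^2\,\eta\,\phi_r^2\,d\mu_t$, which is finite (compact support) and Lipschitz in $t$, with $E_r(0)=0$. Differentiating (in the a.e.\ sense), using $\partial_t\,d\mu_t\le C_0\,d\mu_t$, the differential inequality for $u$, and integrating by parts in the term $\int 2u\,\Delta_t u\,\eta\phi_r^2\,d\mu_t$, one produces cross terms $-2\int u\phi_r^2\,\nabla\eta\cdot\nabla u$, $-4\int u\eta\phi_r\,\nabla\phi_r\cdot\nabla u$ and $2\int u\,(\mathbf{a}\cdot\nabla u)\,\eta\phi_r^2$; Young's inequality applied to each, together with the pointwise bound $|\nabla\eta|^2/\eta=4\beta^2 d^2|\nabla d|^2\eta\le C(n,\beta)$, makes the $|\nabla u|^2$ contributions cancel and leaves
\[
E_r'(t)\ \le\ C_1\,E_r(t)\ +\ \frac{C_2}{r^2}\int_{\{r\le d\le 2r\}}u^2\,\eta\,d\mu_t ,
\]
with $C_1=C_1(n,\alpha_0,\alpha_1,\alpha_3,\beta)$ and $C_2=C_2(n)$. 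Since $E_r(0)=0$, Gr\"onwall gives
\[
E_r(t)\ \le\ \frac{C_2\,e^{C_1T}}{r^2}\int_0^T\!\!\int_{\{r\le d\le 2r\}}u^2\,\eta\,d\mu_s\,ds .
\]

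\medskip
\noindent\textbf{Passing to the limit.} Finally I would send $r\to\infty$. The right-hand side tends to $0$ as soon as the weighted space-time integral $\int_0^T\!\int_M u^2\,\eta\,d\mu_s\,ds$ is finite, in which case the annular pieces are tails of a convergent integral. Its finiteness is where \eqref{eq-vol} and (iii) are used together: (iii) controls $\nabla f$ in the weighted $L^2$ space far out, and integrating $|\nabla f|$ along minimizing $g(t)$-geodesics from $p$ (using that $f(p,\cdot)$, being continuous on $[0,T]$, is bounded), then applying Cauchy--Schwarz and Fubini against the Gaussian and invoking the volume growth \eqref{eq-vol}, forces $u^2 e^{-\beta d^2}$ to be space-time integrable for $\beta$ large. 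Hence $E_r(t)\to 0$ for every $t$; on the other hand $E_r(t)\uparrow\int_M u^2\,\eta\,d\mu_t$ by monotone convergence, so $\int_M u^2\,\eta\,d\mu_t=0$ for all $t\in[0,T]$. Since $\eta>0$ and $u$ is continuous, $u\equiv 0$, i.e.\ $f\le 0$ on $M^n\times[0,T]$.

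\medskip
\noindent\textbf{Main obstacle.} The delicate point is the last step: controlling the noncompactness, i.e.\ proving that the cutoff error terms genuinely vanish in the limit and that the weighted $L^2$-norm of $f^+$ is a priori finite. This is exactly where all three structural hypotheses enter in concert --- \eqref{eq-vol} makes Gaussian weights integrable, (iii) controls $\nabla f$ (hence $f$) at infinity, and (iv) makes the time-dependent metrics uniformly comparable so that a single fixed Gaussian weight works simultaneously for all $t\in[0,T]$. Everything else (integration by parts, Young's inequality, Gr\"onwall) is routine once the localization is set up correctly.
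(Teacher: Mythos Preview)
The paper does not prove Theorem~\ref{thm-EH}; it is quoted from \cite{EH1} for the reader's convenience and then applied. So there is no ``paper's proof'' to compare against. That said, your sketch follows the correct overall architecture (positive part, weighted $L^2$ energy, cutoff, Gr\"onwall), but it contains a real gap in the energy estimate.

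The problem is your choice of a \emph{time-independent} weight $\eta=e^{-\beta d^2}$. After integrating $2u\,\Delta_t u\,\eta\phi_r^2$ by parts and applying Young to the cross term $-2\int u\,\phi_r^2\,\nabla\eta\cdot\nabla u$, the remainder is $\epsilon^{-1}\int u^2\,\tfrac{|\nabla\eta|^2}{\eta}\,\phi_r^2$. You invoke the bound $|\nabla\eta|^2/\eta=4\beta^2 d^2|\nabla d|^2\,\eta\le C(n,\beta)$, which is true, but it only gives $\int u^2\,\phi_r^2\,d\mu_t$ --- there is \emph{no} factor of $\eta$ left, so this term is not controlled by $E_r(t)=\int u^2\eta\phi_r^2$. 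Equivalently, what you would need is $|\nabla\eta|^2/\eta\le C\eta$, i.e.\ $|\nabla\log\eta|\le C$, and for $\eta=e^{-\beta d^2}$ one has $|\nabla\log\eta|=2\beta d$, which is unbounded. If instead you keep track of the $d^2$ factor on $\mathrm{supp}\,\phi_r$ you pick up a coefficient $O(\beta^2 r^2)$ in front of $E_r(t)$, and then Gr\"onwall produces $e^{C\beta^2 r^2 T}$, which competes with (and in general beats) the decay $e^{-\beta r^2}$ coming from $\eta$ on the annulus.

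The standard remedy, and what is actually done in \cite{EH1}, is to take a \emph{time-dependent} heat-kernel-type weight, e.g.\ $\eta(y,t)=\exp\!\big(-\tfrac{d(y)^2}{4(\tau-t)}\big)$ for a fixed $\tau>T$. Then $\partial_t\eta=-\tfrac{d^2}{4(\tau-t)^2}\,\eta$ contributes a good term to $E_r'(t)$ that exactly absorbs the bad $|\nabla\eta|^2/\eta=\tfrac{d^2|\nabla d|^2}{4(\tau-t)^2}\,\eta$ term from Young. With this choice the inequality $E_r'(t)\le C_1E_r(t)+\tfrac{C_2}{r^2}\int_{\{r\le d\le 2r\}}u^2\eta$ does hold with $C_1$ independent of $r$, and the rest of your outline (Gr\"onwall, letting $r\to\infty$, using (iii) and \eqref{eq-vol} to kill the tail) goes through. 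Your ``main obstacle'' paragraph correctly identifies where the hypotheses are used, but the mechanism by which the cross term is neutralized is the time dependence of the weight, not the pointwise bound you wrote.
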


\begin{rem}
\label{rem-vol}
In the case the second fundamental form is at each time slice uniformly bounded in space,  since $\ric_{M_t} \ge -2|A|^2 \ge -C$, for $t\in [0,T]$ (where $C$ may depend on $T$), the uniform volume growth condition \eqref{eq-vol}  of Theorem \ref{thm-EH} holds for $t \in [0,T]$. Hence, we can apply the maximum principle for complete hypersurfaces moving by the mean curvature flow, such that  the second fundamental form is bounded at each time slice and such that (i)-(iv) of Theorem \ref{thm-EH} hold.
\end{rem}

Before presenting the proof of Theorem \ref{asymptotically_app2} we need the following lemma.

\begin{lem}\label{lemma_4.1}
Let  $\widetilde{x}(\cdot,s)$ be the solution to the normalized drifting mean curvature flow (\ref{normalized_tangent_mcf}) with the second fundamental form bounded at each time slice. Assume the initial data $x_{0}:M\to \mathbb{R}^{n+1}$ is a complete immersed hypersurface satisfying  $H+\langle x_0,\nu\rangle\geq 0$ (resp. $H+\langle x_0,\nu\rangle\leq  0$) at $s=0$. Then $\widetilde{H}+\langle\widetilde{x},\widetilde{\nu}\rangle\geq 0$ (resp.$\widetilde{H}+\langle\widetilde{x},\widetilde{\nu}\rangle\leq  0$) for all $s\geq 0$. Moreover, if $H\geq 0$ and $\langle x_0,\nu\rangle\leq 0$ at initial time, then $\widetilde{H}\geq 0$ and $\langle\widetilde{x},\widetilde{\nu}\rangle\leq  0$ for all $s\geq 0$.
\end{lem}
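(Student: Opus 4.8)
\textbf{Proof proposal for Lemma \ref{lemma_4.1}.}

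The plan is to derive an evolution equation for the quantity $w := \widetilde{H} + \langle\widetilde{x},\widetilde{\nu}\rangle$ under the normalized drifting mean curvature flow \eqref{normalized_tangent_mcf} and then invoke the maximum principle for complete noncompact hypersurfaces (Theorem \ref{thm-EH}, in the form of Remark \ref{rem-vol}). First I would recall the standard evolution equations under the unnormalized mean curvature flow: $\frac{\partial}{\partial t} H = \Delta H + |A|^2 H$ and, for $\langle x,\nu\rangle$, one computes $\frac{\partial}{\partial t}\langle x,\nu\rangle = \Delta\langle x,\nu\rangle + |A|^2\langle x,\nu\rangle - 2H$ (this is the well-known computation behind Huisken's monotonicity; see \cite{H1,EH}). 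Because the drifting flow \eqref{tangent_mcf} differs from \eqref{mcf} only by a tangential diffeomorphism, the quantity $H + \frac{\langle x,\nu\rangle}{2t+1}$ satisfies a strictly parabolic equation on the drifting flow obtained by adding the Lie-derivative term $\nabla_{x^T/(2t+1)}$; after the rescaling \eqref{scaling}, \eqref{normalized_tangent_mcf}, a direct but routine computation should give an equation of the form
\begin{equation}\label{eq-w-evol}
\frac{\partial}{\partial s} w = \widetilde{\Delta} w + \langle \mathbf{a}, \widetilde{\nabla} w\rangle + (|\widetilde{A}|^2 + c)\, w
\end{equation}
for some bounded vector field $\mathbf{a}$ and bounded function $c$ (the boundedness of $|\widetilde{A}|^2$, $\mathbf{a}$, $c$ on each finite time interval being exactly the hypothesis that the second fundamental form is bounded at each time slice, together with the parabolic rescaling). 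The scalar $w = \widetilde{H}+\langle\widetilde{x},\widetilde{\nu}\rangle$ is the natural candidate because its zero set is precisely the self-expander equation \eqref{expanding_soliton} (in the graph-normal convention $\vec{\mathbf{H}} = \widetilde{x}^\perp$), so one expects it to obey a linear parabolic equation with no inhomogeneous term.

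Next I would check the hypotheses (i)--(iv) of Theorem \ref{thm-EH} on any interval $[0,T]$: condition (i) is \eqref{eq-w-evol} with $b = |\widetilde{A}|^2 + c$ and $\mathbf{a}$ as above, both bounded on $[0,T]$ since $|\widetilde{A}|$ is bounded at each time slice and the flow is smooth; condition (ii) is the sign hypothesis on $H + \langle x_0,\nu\rangle$ at $s=0$ (to get the "$\le 0$" conclusion one applies the maximum principle to $-w$, to get "$\ge 0$" one applies it to $w$ after noting the equation is unchanged by $w\mapsto -w$); condition (iv), the bound on $|\partial_s \widetilde{g}_{ij}|$, follows from \eqref{eq_1} after rescaling together with the bound on $|\widetilde{A}|$ and on $\langle\widetilde x,\widetilde\nu\rangle$ on $B$-balls; and the volume-growth condition \eqref{eq-vol} needed to run the maximum principle is supplied by Remark \ref{rem-vol}, since $\ric_{\widetilde{M}_s} \ge -2|\widetilde{A}|^2 \ge -C$ on $[0,T]$. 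The integrability condition (iii) on $\int_0^T\int_M e^{-\alpha^2 \dist^2}|\widetilde{\nabla} w|^2\, d\widetilde{\mu}_s\, ds$ requires a polynomial-in-distance bound on $|\widetilde{\nabla} w|$; this should follow from the interior derivative estimates of Ecker--Huisken \cite{EH1} (already used in the proof of Theorem \ref{local_asymptotically}) controlling $|\widetilde{\nabla}\widetilde{A}|$ and $|\widetilde{\nabla}\langle\widetilde x,\widetilde\nu\rangle|$ in terms of $|\widetilde{x}|$, against the exponential Gaussian weight. Since the conclusion is claimed for all $s\ge 0$, I would run the argument on $[0,T]$ for arbitrary $T$ and then let $T\to\infty$.

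For the last assertion — that $H\ge 0$ and $\langle x_0,\nu\rangle\le 0$ at $s=0$ forces $\widetilde{H}\ge 0$ and $\langle\widetilde{x},\widetilde{\nu}\rangle\le 0$ for all $s$ — I would treat the two quantities separately. For $\widetilde{H}$: the mean curvature satisfies $\frac{\partial}{\partial s}\widetilde{H} = \widetilde{\Delta}\widetilde{H} + \langle\mathbf{a},\widetilde{\nabla}\widetilde{H}\rangle + (|\widetilde{A}|^2 - 1)\widetilde{H}$ under \eqref{normalized_tangent_mcf} (the "$-1$" coming from the rescaling; the drift term again from the tangential diffeomorphism), a homogeneous linear parabolic equation, so $\widetilde{H}\ge 0$ is preserved by the same maximum principle. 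For $\langle\widetilde{x},\widetilde{\nu}\rangle$: one computes its evolution and finds $\frac{\partial}{\partial s}\langle\widetilde{x},\widetilde{\nu}\rangle = \widetilde{\Delta}\langle\widetilde{x},\widetilde{\nu}\rangle + \langle\mathbf{a},\widetilde{\nabla}\langle\widetilde x,\widetilde\nu\rangle\rangle + (|\widetilde{A}|^2 + c')\langle\widetilde{x},\widetilde{\nu}\rangle - 2\widetilde{H}$; here the inhomogeneous term $-2\widetilde{H}$ has the favorable sign once we already know $\widetilde{H}\ge 0$, so that $\langle\widetilde x,\widetilde\nu\rangle$ is a subsolution of a homogeneous linear parabolic equation and $\langle\widetilde x,\widetilde\nu\rangle\le 0$ is preserved. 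The main obstacle I anticipate is not the algebra of the evolution equations but verifying hypothesis (iii) — i.e.\ ensuring the weighted gradient integrability genuinely holds in the noncompact setting — and, relatedly, pinning down the precise bounded drift vector $\mathbf{a}$ coming from the reparametrization so that $\sup|\mathbf{a}|<\infty$ on each $[0,T]$; both hinge on combining the Type III rescaling with the Ecker--Huisken derivative estimates, and deserve to be spelled out carefully.
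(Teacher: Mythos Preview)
Your overall strategy --- derive a linear parabolic equation for $w=\widetilde H+\langle\widetilde x,\widetilde\nu\rangle$ and apply Theorem~\ref{thm-EH} --- is the right idea, but there is a genuine gap in the execution: you work directly under the normalized \emph{drifting} flow \eqref{normalized_tangent_mcf}, and this forces unbounded coefficients into the equation.  Concretely, \eqref{normalized_tangent_mcf} differs from the normalized mean curvature flow \eqref{normalized_mcf1} by the tangential vector field $\widetilde x^{T}$, so the drift $\mathbf a$ you pick up in \eqref{eq-w-evol} is $\mathbf a=\widetilde x^{T}$.  On a noncompact hypersurface $|\widetilde x^{T}|$ is unbounded, and no amount of curvature control (Type~III assumption, Ecker--Huisken derivative estimates) will help, since $\widetilde x^{T}$ is a position quantity, not a curvature quantity.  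Thus hypothesis (i) of Theorem~\ref{thm-EH}, which requires $\sup|\mathbf a|<\infty$, fails.  The same problem hits hypothesis (iv): under \eqref{normalized_tangent_mcf} one has $\partial_s\widetilde g_{ij}=-2(\widetilde H+\langle\widetilde x,\widetilde\nu\rangle)\widetilde h_{ij}$ (see the computation in Remark~\ref{another-proof}), and $\langle\widetilde x,\widetilde\nu\rangle$ is not a priori bounded.  You flag the boundedness of $\mathbf a$ as something to check at the end, but your proposed justification (``Type III rescaling with Ecker--Huisken derivative estimates'') does not address it.

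The paper's proof sidesteps this entirely by running the argument on the normalized mean curvature flow \eqref{normalized_mcf1} rather than the drifting flow.  There the tangential part $-\overline x^{T}$ is built into the flow itself, and the evolution equations (quoted from \cite[Lemma~5.5]{EH}) come out with \emph{no} first-order term: $(\partial_s-\overline\Delta)(\overline H+\langle\overline x,\overline\nu\rangle)=(|\overline A|^2-1)(\overline H+\langle\overline x,\overline\nu\rangle)$, and similarly for $\overline H$ and $\langle\overline x,\overline\nu\rangle$ separately.  Moreover $\partial_s\overline g_{ij}=-2\overline H\,\overline h_{ij}-2\overline g_{ij}$ is bounded once $|\overline A|$ is.  Hence Theorem~\ref{thm-EH} applies with $\mathbf a=0$.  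Since the two flows differ only by a tangential diffeomorphism and the sign conditions $\overline H\ge0$, $\langle\overline x,\overline\nu\rangle\le0$, $\overline H+\langle\overline x,\overline\nu\rangle\ge0$ are diffeomorphism-invariant, the conclusion transfers to $\widetilde x$ for free.  (Incidentally, the coefficient in the $\widetilde H$ equation is $|\widetilde A|^2+1$, not $|\widetilde A|^2-1$; this does not affect the argument but is worth getting right.)
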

\begin{proof}
Let $\overline{x}(\cdot ,s)$ be the solution to the normalized mean curvature flow
(\ref{normalized_mcf1})
with the initial data $x_{0}$.
It follows from Lemma 5.5 in \cite{EH} that
\begin{equation}
(\frac{\partial}{\partial s}-\overline{\Delta})\overline{H}=|\overline{A}|^2\overline{H}+\overline{H},
\end{equation}
\begin{equation}
(\frac{\partial}{\partial s}-\overline{\Delta})\langle\overline{x},\overline{\nu}\rangle=|\overline{A}|^2\langle\overline{x},\overline{\nu}\rangle-2\overline{H}-\langle\overline{x},\overline{\nu}\rangle,
\end{equation}
\begin{equation}\label{eq4_3}
(\frac{\partial}{\partial s}-\overline{\Delta})(\overline{H}+\langle\overline{x},\overline{\nu}\rangle)=(|\overline{A}|^2-1)(\overline{H}+\langle\overline{x},\overline{\nu}\rangle),
\end{equation}
where $\overline{A}(\cdot,s)$ is the seconded fundamental form of $\overline{x}(\cdot,s)$. By Remark \ref{rem-vol} and the maximum principle for noncompact manifolds (Theorem \ref{thm-EH}), we have $\overline{H}+\langle\overline{x},\overline{\nu}\rangle\geq 0$ (resp.$\overline{H}+\langle\overline{x},\overline{\nu}\rangle\leq  0$) for all $s\geq 0$ if $H+\langle x_0,\nu\rangle\geq 0$ (resp. $H+\langle x_0,\nu\rangle\leq  0$) at $s=0$. Moreover,
$\overline{H}\geq 0$ and
$\langle\overline{x},\overline{\nu}\rangle\leq 0$  for all $s\geq 0$ if $H\geq 0$ and $\langle x_0,\nu\rangle\leq 0$ at initial time.
Since the normalized drifting flow (\ref{normalized_tangent_mcf})
only differs from (\ref{normalized_mcf1}) by the tangent diffeomorphisms, we conclude that Lemma \ref{lemma_4.1} holds.
\end{proof}

\begin{proof}[Proof of Theorem \ref{asymptotically_app2}]
The rescaled solution to the mean curvature flow $x_{\mu}(\cdot,t) := \mu^{-\frac{1}{2}}(x(\cdot, \mu t)-q_0 )$ is also the Type III solution, with the initial
data $\mu^{-\frac{1}{2}}(x_0-q_0 )$  satisfying $\langle x_{\mu}(\cdot,0), \nu\rangle = \langle\mu^{-\frac{1}{2}}(x_0-q_0 ),\nu\rangle \leq 0$, by the assumptions in Theorem \ref{asymptotically_app2}. Moreover,
\[H_{\mu}(\cdot,0) + \langle x_{\mu}(\cdot,0), \nu\rangle = \mu^{-\frac 12}\, \big(H(\cdot,0) + \langle x_0 - q_0 , \nu\rangle \big) \ge 0.\]\
Let $\widetilde{x}(\cdot,s)$ and $\widetilde{x}_{\mu}(\cdot,s)$ be the corresponding normalized drifting mean curvature flow for $x(\cdot,t)$ and $x_{\mu}(\cdot,t)$ respectively. It is easy to see that $\widetilde{x}_{\mu}(\cdot,s)$ subconverges to the limiting self-expander if and only if $\widetilde{x}(\cdot,s)$ subconverges to the limiting self-expander.
Hence, as a matter of scaling with $\mu = 1$, without losing the generality we can assume that $q_0 = 0$ and that
\[\langle x_0, \nu \rangle \le 0, \qquad \mbox{and} \qquad  H(\cdot,0)+ \langle x_0, \nu\rangle  \ge 0.\]
By Lemma \ref{lemma_4.1}, $\langle\widetilde{x},\widetilde{\nu}\rangle\leq 0$ and $\widetilde{H}+\langle\widetilde{x},\widetilde{\nu}\rangle\geq 0$ for all $s\geq 0$.

  Moreover, we have
$$
\frac{\partial}{\partial s}|\widetilde{x}|^2=2\langle \vec{\widetilde{H}}-\widetilde{x}^{\perp}, \widetilde{x}\rangle=-2\langle\widetilde{x},\widetilde{\nu}\rangle(\widetilde{H}+\langle\widetilde{x},\widetilde{\nu}\rangle)\geq 0.
$$
This implies that for every $p$ such that $\widetilde{x}(p,s) \in B(o,R)$, that is, $|\widetilde{x}(p,s)| \le R$, we have
\[|\widetilde{x}(p,0)| \le |\widetilde{x}(p,s)| \le R.\]
Then Theorem \ref{asymptotically_app2} follows from Corollary \ref{asymptotically} immediately.
\end{proof}

\begin{rem}
\label{another-proof}
We also give another proof of Theorem \ref{asymptotically_app2} without applying Corollary \ref{asymptotically}. As shown in the proof presented above
 we can assume that $p = 0$ and that
\[\langle x_0, \nu \rangle \le 0, \qquad \mbox{and} \qquad \langle x_0, \nu_0\rangle + H(\cdot,0) \ge 0,\]
without losing the generality.
 We know that $\langle\widetilde{x},\widetilde{\nu}\rangle\leq 0$ and $\widetilde{H}+\langle\widetilde{x},\widetilde{\nu}\rangle\geq 0$ for all $s\geq 0$ by Lemma \ref{lemma_4.1}. We compute that
\begin{align*}\label{eq4_1}
\frac{\partial}{\partial s}\widetilde{g}_{ij} &=2\partial_i (\vec{\widetilde{\mathbf{H}}}-\widetilde{x}^{\perp}) \partial_j\widetilde{x} \nonumber\\
&=-2\widetilde{H}\widetilde{h}_{ij}+2\widetilde{x}^{\perp}\partial_i\partial_j \widetilde{x} \nonumber\\
&=-2\widetilde{H}\widetilde{h}_{ij}-2\langle\widetilde{x},\widetilde{\nu}\rangle\widetilde{h}_{ij},
\end{align*}
where we use $\widetilde{x}^{\perp}=\langle\widetilde{x},\widetilde{\nu}\rangle\widetilde{\nu}$ and $\widetilde{h}_{ij}=-\widetilde{\nu}\cdot \partial_i\partial_j \widetilde{x}$.
It follows that
\begin{align*}
\frac{\partial}{\partial s}d\widetilde{\mu}_s &=(-|\vec{\widetilde{\mathbf{H}}}|^2+\langle\widetilde{x}^{\perp},\vec{\widetilde{\mathbf{H}}}\rangle)d\widetilde{\mu}_s\nonumber\\
&=-\widetilde{H}(\widetilde{H}+\langle\widetilde{x},\widetilde{\nu}\rangle)d\widetilde{\mu}_s.
\end{align*}
We have
$$\frac{\partial}{\partial s}|\widetilde{x}|^2=2\langle \vec{\widetilde{H}}-\widetilde{x}^{\perp}, \widetilde{x}\rangle=-2\langle\widetilde{x},\widetilde{\nu}\rangle(\widetilde{H}+\langle\widetilde{x},\widetilde{\nu}\rangle).$$
It follows that
\begin{equation}\label{monotocity_formula1}
\frac{\partial}{\partial s}e^{-\frac{1}{2}|\widetilde{x}|^2}d\widetilde{\mu}_s=\langle\widetilde{x},\widetilde{\nu}\rangle^2-\widetilde{H}^2=
(\langle\widetilde{x},\widetilde{\nu}\rangle-\widetilde{H})(\langle\widetilde{x},\widetilde{\nu}\rangle+\widetilde{H})\leq 0.
\end{equation}
Then we have
\begin{equation*}
\frac{d}{d s}\int_Me^{-\frac{1}{2}|\widetilde{x}|^2}d\widetilde{\mu}_s=
\int_M(\langle\widetilde{x},\widetilde{\nu}\rangle-\widetilde{H})(\langle\widetilde{x},\widetilde{\nu}\rangle+\widetilde{H})e^{-\frac{1}{2}|\widetilde{x}|^2}d\widetilde{\mu}_s\leq 0.
\end{equation*}
It follows that
\begin{align*}
e^{-\frac{1}{2}R^2}H^n(\widetilde{x}(M,s)\cap B(o,R))\leq\int_{\widetilde{M}_s\cap B(o,R)}e^{-\frac{1}{2}|\widetilde{x}|^2}d\widetilde{\mu}_s
\leq \int_{M}e^{-\frac{1}{2}|\widetilde{x}_0|^2}d\widetilde{\mu}_0\leq C_0.
\end{align*}
Hence
$H^n(\widetilde{x}(M,s)\cap B(o,R))\leq C_0e^{\frac{1}{2}R^2}$ for any $R>0$. Note that all derivatives of the second fundamental form of hypersurfaces $\widetilde{x}(M,s)$ are uniformly bounded, which is a consequence of our Type III assumption, rescaling \eqref{scaling} and Ecker and Huisken's gradient estimates (see \cite{EH1}).
Moreover, by the Type III assumption, similarly as in the proof of Corollary \ref{asymptotically}  we can show that $\widetilde{x}(M,s)\cap B(o,R_0))\neq \emptyset$ for some $R_0$.
As a result we conclude that
$\widetilde{x}(M,s)\cap B(q,R)$ (under reparametrization) along sequences (as $s_i\to \infty$) subconverges smoothly to a limiting immersion $\widetilde{x}_{\infty}$ in $B(o,R)$ for any $R\geq R_0$ by Langer's result \cite{J}. Moreover, we have
$\langle\widetilde{x}_{\infty},\widetilde{\nu}_{\infty}\rangle\leq 0$ and $\widetilde{H}_{\infty}+\langle\widetilde{x}_{\infty},\widetilde{\nu}_{\infty}\rangle\geq 0$. By the (\ref{monotocity_formula1}),
we have $(\langle\widetilde{x}_{\infty},\widetilde{\nu}_{\infty}\rangle-\widetilde{H}_{\infty})(\langle\widetilde{x}_{\infty},\widetilde{\nu}_{\infty}\rangle+\widetilde{H}_{\infty})=0$.
Hence $\langle\widetilde{x}_{\infty},\widetilde{\nu}_{\infty}\rangle+\widetilde{H}_{\infty}=0$.
\end{rem}

\begin{proof}[Proof of Theorem \ref{asymptotically_app3}]
By rescaling the flow as in the proof of Theorem \ref{asymptotically_app2} allows us to assume without losing the generality that $x_0$ satisfies  $H+\langle x_0,\nu\rangle \leq 0$. By Lemma \ref{lemma_4.1}, $\widetilde{H}\geq 0$, $\langle\widetilde{x},\nu\rangle \le 0$ and $\widetilde{H}+\langle\widetilde{x},\widetilde{\nu}\rangle\leq 0$ for all $s\geq 0$.

By (\ref{monotocity_formula1}), we have
\begin{equation*}\label{monotocity_formula2}
\frac{\partial}{\partial s}e^{-\frac{1}{2}|\widetilde{x}|^2}d\widetilde{\mu}_s=\langle\widetilde{x},\widetilde{\nu}\rangle^2-\widetilde{H}^2=
(\langle\widetilde{x},\widetilde{\nu}\rangle-\widetilde{H})(\langle\widetilde{x},\widetilde{\nu}\rangle+\widetilde{H})\geq 0.
\end{equation*}
Then
\begin{equation*}
\frac{d}{d s}\int_Me^{-\frac{1}{2}|\widetilde{x}|^2}d\widetilde{\mu}_s=
\int_M(\langle\widetilde{x},\widetilde{\nu}\rangle-\widetilde{H})(\langle\widetilde{x},\widetilde{\nu}\rangle+\widetilde{H})e^{-\frac{1}{2}|\widetilde{x}|^2}d\widetilde{\mu}_s\geq 0.
\end{equation*}
and hence
\begin{equation*}
\int^{\infty}_0\int_M(\langle\widetilde{x},\widetilde{\nu}\rangle-\widetilde{H})
(\langle\widetilde{x},\widetilde{\nu}\rangle+\widetilde{H})e^{-\frac{1}{2}|\widetilde{x}|^2}d\widetilde{\mu}_s
\leq \lim\limits_{s\to\infty}\int_Me^{-\frac{1}{2}|\widetilde{x}|^2}d\widetilde{\mu}_s<\infty.
\end{equation*}
From this point on we can argue as in the proof of Theorem \ref{asymptotically_app2} given in Remark \ref{another-proof} to conclude that the normalized drifting mean curvature flow subconverges smoothly to the limiting self-expander.

Moreover,  if we have $H^n(\widetilde{x}(M,s)\cap B(o,R))\leq CR^m$, then
\begin{align*}
&\int_{M}e^{-\frac{1}{2}|\widetilde{x}|^2}d\widetilde{\mu}_s
=\sum\limits_{j=1}^{\infty}\int_{\widetilde{x}(M,s)\cap B(o,R^j)\setminus\widetilde{x}(M,s)\cap B(o,R^{j-1})}e^{-\frac{1}{2}|\widetilde{x}|^2}d\widetilde{\mu}_s\\
\leq& \sum\limits_{j=0}^{\infty}H^n(\widetilde{x}(M,s)\cap B(o,R^j)\setminus \widetilde{x}(M,s)\cap B(o,R^{j-1}))e^{-\frac{1}{2}R^{2j-2}}\\
\leq& \sum\limits_{j=0}^{\infty}CR^{mj}e^{-\frac{1}{2}R^{2j-2}}<\infty.
\end{align*}
\end{proof}

The example \ref{counterexample2} illustrates that the asymptotic limit of Type III mean curvature flow which is an entire graph with  positive mean curvature may not always be an expander. Corollary \ref{cor_app} describes under which additional condition on an initial hypersurface we can guarantee that the asymptotic limit in the situation described as above is always an expander.

\begin{proof}[Proof of Corollary \ref{cor_app}]
By the result of Ecker and Huisken \cite{EH}(see Proposition 4.4 in \cite{EH}), we know that the mean curvature flow of an entire graph satisfying the linear growth condition
\begin{equation}
\langle \nu,w\rangle^{-1}\leq c,
\end{equation}
  must be Type III, where $w$ is a fixed vector such that $\langle \nu,w\rangle>0$.
If $\langle x_0-q_0 ,\nu\rangle \leq C$ for some fixed vector $q_0$, then
we have $\langle x_0-q_0 -c_1w,\nu\rangle \leq C-c^{-1}c_1\leq 0$ for $c_1$ large enough.
By Corollary 3.2 in \cite{EH}, 
$\langle \nu,w\rangle^{-1}\leq c$ remains valid under the mean curvature flow.
Since $\nu$ is scaling invariant, we conclude that  $\langle \widetilde{\nu},w\rangle^{-1}\leq c$ remains valid under the normalized drifting mean curvature flow and $H^n({\widetilde{x}(M,s)\cap B(o,R)})= \int_{|x|\leq R}\langle \widetilde{\nu},w\rangle^{-1} dx^n\leq cR^n$.  By Theorem \ref{asymptotically_app3}, we know that its corresponding normalized drifting mean curvature flow (\ref{normalized_tangent_mcf}) subconverges smoothly to the limiting  self-expander.
\end{proof}

\thanks{\textbf{Acknowledgement}: First author would like to thank Math Department of Rutgers University for their hospitality, and is grateful to Professor Xiaochun Rong for his constant support and encouragement.}

\end{document}